\newtheorem{theorem}{Theorem}[section]
\theoremstyle{plain}
\newtheorem{definition}{Definition}[section]
\newtheorem{lemma}{Lemma}[section]
\newtheorem{remark}{Remark}[section]
\numberwithin{equation}{section}
\begin{document}
\title[Long-time dynamics]{Long-time dynamics of the parabolic $p$-Laplacian
equation}
\author{P. G. Geredeli\ \ }
\address{{\small Department of Mathematics,} {\small Faculty of Science,
Hacettepe University, Beytepe 06800}, {\small Ankara, Turkey}}
\email{pguven@hacettepe.edu.tr}
\author{ \ A. Kh. Khanmamedov}
\email{azer@hacettepe.edu.tr}
\subjclass[2000]{ 35L55, 35B41}
\keywords{$p$-Laplacian equation, attractors}

\begin{abstract}
In this paper, we study the long-time behaviour of solutions of Cauchy
problem for the parabolic $p$-Laplacian equation with variable coefficients.
Under the mild conditions on the coefficient of the principal part and
without upper growth restriction on the source function, we prove that this
problem possesses a compact and invariant global attractor in $L^{2}(R^{n})$.
\end{abstract}

\maketitle

%\date{July 10, 2011}

\section{Introduction}

The main goal of this paper is to discuss the long-time behaviour (in the
terms of attractors) of the solutions for the following equation%
\begin{equation}
u_{t}-div(\sigma (x)\left\vert \nabla u\right\vert ^{p-2}\nabla u)+\beta
(x)u+f(u)=g(x),\text{ \ \ \ }(t,x)\in (0,\infty )\times R^{n},  \tag{1.1}
\end{equation}%
with the initial data%
\begin{equation}
u(0,x)=u_{0}(x),  \tag{1.2}
\end{equation}%
where $p\geq 2$, $g\in L^{2}(R^{n})$, $u_{0}\in L^{2}(R^{n})$, $n\geq 2$.
Here, the functions $\sigma $, $\beta $ and $f$ satisfy the following
assumptions:%
\begin{equation}
\sigma \in L_{loc}^{1}(R^{n}),\ \sigma (\cdot )\geq 0,\  \sigma
^{-\frac{2n}{n(p-2)+2p}}\in L_{loc}^{1}(R^{n}),\ \ \ \ \ \ \ \ \ \
\ \ \ \ \ \ \ \ \ \ \ \ \ \ \ \ \ \ \ \ \ \ \  \tag{1.3}
\end{equation}%
\begin{equation}
\beta \in L^{\infty }(R^{n}),\ \beta (\cdot )\geq 0,\ \beta
(x)\geq \beta _{0}>0\text{ a.e. in }\left\{\left\vert x\right\vert
\geq r_{0}\right\} \text{ for some }r_{0}>0,\tag{1.4}
\end{equation}%
\begin{equation}
f\in C^{1}(R),\text{ }f(s)s\geq 0,\ \forall \text{ }s\in R,\text{
 }f^{\prime }(\cdot )>-c,\text{  }c>0.\text{ \ \ \ \ \ \ \ \ \ \
\ \ \ \ \ \ \ \ \ \ \ \ \ \ \ \ \ \ \ }  \tag{1.5}
\end{equation}

The understanding of the long-time behaviour of dynamical systems is one of
the most important problems of modern mathematics. One way of approaching to
this problem is to analyse the existence of the global attractor. The
existence of the global attractors for the parabolic equations has
extensively been studied by many authors. We refer to [1-7] and the
references therein for the reaction-diffusion equations and to [3, 8-15] for
the evolution $p$-Laplacian equations. When $\sigma (x)\equiv 1$, $\beta
(x)\equiv \lambda ,$ the existence of the global attractor for equation
(1.1) was studied in [3, 8-11] for bounded domains and in [12-16] for
unbounded domains.

In this paper we deal with the equation (1.1) which contains the variable
coefficients $\sigma (\cdot )$ and $\beta (\cdot )$. This type of equations
have recently taken an interest by several authors. In \cite{17}, for the
case $\beta (x)\equiv 0,$ the authors have shown the existence of the global
attractor for equation (1.1) in a bounded domain. In that paper the
diffusion coefficient $\sigma (\cdot )$ is assumed to be like $\left\vert
x\right\vert ^{\alpha }$ for $\alpha \in (0,p)$ and due to the studying in a
bounded domain the authors prove the asymptotic compactness property of the
solutions by using the compact embeddings of Sobolev Spaces. The existence
of the global attractor for equation (1.1), under the assumption
\begin{equation*}
\sigma (x)\sim \left\vert x\right\vert ^{\alpha }+\left\vert x\right\vert
^{\gamma }\text{ , \ }\alpha \in (0,p),\text{ \ }\gamma >p+\frac{n}{2}(p-2),
\end{equation*}%
has been shown in \cite{18}. Although the authors in \cite{18} have studied
the problem in an arbitrary domain, the compact embeddings could also be
used to obtain the asymptotic compactness of solutions because of the
conditions imposed on $\sigma (\cdot ).$

The main novelty in our paper is the following: $(i)$ we weaken the
conditions on the function $\sigma (\cdot )$ which are given in \cite{17}
and \cite{18}, so that the embedding of the space with the norm $\left(
\left\Vert \nabla u\right\Vert _{L_{\sigma }^{p}(\Omega )}+\left\Vert
u\right\Vert _{L^{2}(\Omega )}\right) $ into the space $L^{2}(\Omega )$ is
not compact, for each subdomain $\Omega \subset R^{n}$; \ \ $(ii)$ we remove
the upper growth condition on the source term.

The absence of the upper growth condition on $f$ and the lack of the compact
embedding cause some difficulties for the existence of the solutions and the
asymptotic compactness of the solution operator in $L^{2}(R^{n})$. We prove
the existence of the solutions by Galerkin's method and to overcome the
difficulties related to the limit transition in the source term $f,$ we
apply the weak compactness theorem in the Orlicz spaces. To prove the
asymptotic compactness of the solutions, we first establish the validity of
the energy equalities by using the approximation of the weak solutions by
the bounded functions and then apply the approach of \cite{19} by using the
weak compactness argument.

Our main result is as follows:

\begin{theorem}
Let conditions (1.3)-(1.5) hold. Then problem (1.1)-(1.2) possesses a
compact and invariant global attractor in $L^{2}(R^{n})$.
\end{theorem}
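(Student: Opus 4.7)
The plan is to verify the three standard ingredients that yield a compact invariant global attractor: well-posedness, which produces a continuous semigroup $S(t):L^{2}(R^{n})\to L^{2}(R^{n})$; existence of a bounded absorbing set in $L^{2}(R^{n})$; and asymptotic compactness of $\{S(t)\}_{t\geq 0}$ on $L^{2}(R^{n})$. Invariance of the attractor $\mathcal{A}$ then follows from the abstract theorem for asymptotically compact dissipative semigroups.

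\textbf{Step 1 (well-posedness).} Weak solutions can be built by Galerkin approximation along an increasing sequence of balls $B_{R_{k}}\subset R^{n}$. Multiplying the approximate equation by $u_{k}$ and using $(1.4)$--$(1.5)$ yields uniform bounds in $L^{\infty }(0,T;L^{2})$, $L^{p}(0,T;W_{\sigma }^{1,p})$ together with a uniform $L^{1}$ bound on $f(u_{k})u_{k}$. Since no upper growth bound is imposed on $f$, the limit passage in the nonlinearity is delicate: by the de la Vall\'ee Poussin criterion $\{f(u_{k})\}$ is equi-integrable and weakly compact in the dual of a suitable Orlicz space; combining this with a.e.\ convergence of a subsequence allows the identification $f(u_{k})\rightharpoonup f(u)$. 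Uniqueness follows from the monotonicity of $|\nabla u|^{p-2}\nabla u$ and $f'\geq -c$ after testing the difference equation by $u-v$ and applying Gronwall.

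\textbf{Step 2 (absorbing set).} Multiplying $(1.1)$ by $u$ and using $f(u)u\geq 0$ together with the splitting $\int \beta u^{2}dx\geq \beta _{0}\int _{|x|\geq r_{0}}u^{2}dx$ yields
\begin{equation*}
\tfrac{1}{2}\tfrac{d}{dt}\|u\|_{L^{2}}^{2}+\int \sigma |\nabla u|^{p}dx+\beta _{0}\int _{|x|\geq r_{0}}u^{2}dx\leq \int gu\,dx.
\end{equation*}
A weighted Sobolev inequality on $\{|x|\leq r_{0}\}$---available precisely because the integrability condition on $\sigma ^{-2n/(n(p-2)+2p)}$ in $(1.3)$ is the one producing the embedding $W_{\sigma }^{1,p}(\{|x|\leq r_{0}\})\hookrightarrow L^{2}$---absorbs $\int _{|x|\leq r_{0}}u^{2}dx$ into the two dissipation terms on the left, giving a differential inequality of the form $\tfrac{d}{dt}\|u\|_{L^{2}}^{2}+c\|u\|_{L^{2}}^{2}\leq C\|g\|_{L^{2}}^{2}$ and hence an absorbing ball $B_{0}\subset L^{2}(R^{n})$.

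\textbf{Step 3 (asymptotic compactness).} This is the central step and the main obstacle, precisely because $(1.3)$ is too weak to yield any compact embedding $W_{\sigma }^{1,p}(\Omega )\hookrightarrow L^{2}(\Omega )$. I would split the argument into three substeps. (i) \emph{Tail cut-off}: multiplying $(1.1)$ by $u\chi (|x|/R)$ with $\chi $ smooth and vanishing on $\{|x|\leq 1\}$, and exploiting $\beta \geq \beta _{0}$ outside $\{|x|\leq r_{0}\}$, yields a uniform tail decay
\begin{equation*}
\lim _{R\to \infty }\sup _{u_{0}\in B_{0}}\sup _{t\geq t_{0}}\int _{|x|\geq R}|S(t)u_{0}|^{2}dx=0.
\end{equation*}
(ii) \emph{Energy equality}: approximate the weak solution by the truncations $T_{N}(u)$, write the energy identity for $T_{N}(u)$, and pass to the limit in $N$ with the Orlicz weak-compactness tool from Step~1, obtaining the exact equality for $\tfrac{1}{2}\|u(t)\|_{L^{2}}^{2}$. (iii) \emph{Strong convergence}: for $u_{0,k}\in B_{0}$ and $t_{k}\to \infty $, extract $u_{k}(t_{k})\rightharpoonup w$ weakly in $L^{2}(R^{n})$; using the energy equality, weak lower semicontinuity of the dissipation terms, and the backward translation-in-time trick of \cite{19} (write $u_{k}(t_{k})=S(T)\,S(t_{k}-T)u_{0,k}$ for fixed $T$ and let $T\to \infty $), conclude $\limsup _{k}\|u_{k}(t_{k})\|_{L^{2}}\leq \|w\|_{L^{2}}$. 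Weak convergence plus convergence of norms gives strong convergence in $L^{2}(\{|x|\leq R\})$, and the tail estimate of (i) upgrades this to strong convergence in $L^{2}(R^{n})$; existence of the compact invariant attractor then follows.

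The principal obstacle is Step~3: one must carry out the energy-equation method in the presence of \emph{both} the degenerate $p$-Laplacian flux (handled by monotonicity) and a nonlinearity $f$ of arbitrary growth (handled by Orlicz equi-integrability), while having no compactness whatsoever from any Sobolev embedding. The energy equality of Step~3(ii), obtained through a truncation argument, is precisely what makes this work.
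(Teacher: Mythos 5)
Your overall architecture coincides with the paper's: Galerkin approximation plus weak compactness in Orlicz spaces to pass to the limit in the unbounded nonlinearity $f$, uniqueness and the absorbing ball exactly as you describe (the paper's Lemmas 2.1--2.2 supply the weighted embedding $W\subset L^{2}(R^{n})$ you invoke, and the truncations $B_{k}$ play the role of your $T_{N}$ in justifying the energy identities), and the energy method of \cite{19} for asymptotic compactness. However, two points in your Step~3 need to be flagged. First, your substep (i) (the tail cut-off) is not in the paper, and under hypothesis (1.3) it would likely fail: multiplying by $u\chi(|x|/R)$ produces the commutator $\frac{1}{R}\int_{\{R\leq|x|\leq 2R\}}\sigma|\nabla u|^{p-2}\nabla u\cdot\nabla\chi\,u\,dx$, and since (1.3) imposes only \emph{local} conditions on $\sigma$ (it may blow up arbitrarily fast at infinity), this term cannot be absorbed by $\int\sigma|\nabla u|^{p}$ and $\|u\|_{L^{2}}^{2}$. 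Fortunately it is also unnecessary: once you have $u_{k}(T)\rightharpoonup w(T)$ in $L^{2}(R^{n})$ together with $\limsup_{k}\|u_{k}(T)\|_{L^{2}(R^{n})}\leq\|w(T)\|_{L^{2}(R^{n})}$, uniform convexity already gives strong convergence in all of $L^{2}(R^{n})$, with no tail estimate. The paper indeed proves asymptotic compactness with no cut-off whatsoever.

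Second, and more seriously, the mechanism by which $\limsup_{k}\|u_{k}(T)\|\leq\|w(T)\|$ is actually obtained is the heart of the proof and your sketch does not address it. In the energy identity the term $\frac{1}{2}\|u_{k}(s)\|_{L^{2}}^{2}$ sits on the right-hand side and converges only weakly, and because the dissipation $\int\sigma|\nabla u|^{p}$ is $p$-homogeneous rather than quadratic there is no clean exponential decay factor $e^{-\delta T}$ to kill it as in Ball's classical argument. The paper resolves this by: integrating the energy identity over the initial time $s\in(\varepsilon,T)$; using the strong monotonicity of $A$ to extract the extra coercive term $c\int_{2\varepsilon}^{T}s\|u_{k}(s)-w(s)\|_{L^{2}}^{p}\,ds$ on the left (inequality (4.7)); bounding the troublesome right-hand side $\int_{\varepsilon}^{T}\|u_{k}(s)-w(s)\|_{L^{2}}^{2}\,ds$ by H\"older in time against that coercive term, so that after Young's inequality only a remainder of order $\log T$ (for $p=2$) or $T^{(p-2)/(p-1)}$ (for $p>2$) survives; and finally dividing by the factor $T-\varepsilon$ multiplying $\|u_{k}(T)-w(T)\|^{2}$ and letting $T\to\infty$. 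This yields $\liminf_{m}\liminf_{k}\|S(t_{m})\varphi_{m}-S(t_{k})\varphi_{k}\|_{L^{2}}=0$ along every subsequence, and relative compactness follows by the $\varepsilon_{0}$-net contradiction of \cite{19}. Your phrase ``weak lower semicontinuity of the dissipation terms and let $T\to\infty$'' points in the right direction but, as stated, does not close this loop; without the integration in $s$ and the absorption via the monotonicity term, the argument has a genuine gap.
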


The present paper is organized as follows. In the next section, we give some
definitions and lemmas which will be used in the following sections. In
section 3, the well-posedness of problem (1.1)-(1.2) is proved. In section
4, we show the existence of the absorbing set and present the proof of the
asymptotic compactness to establish our main result.

\section{Preliminaries}

\qquad This section is devoted to give some definitions and lemmas which
will be used in the next sections. In order to study problem (1.1)-(1.2),
let us begin with the introduction of the spaces $W$ and $W_{b}$.

\begin{definition}
Under conditions (1.3)-(1.4), we define the spaces $W$ and $W_{b}$ as the
closure of $C_{0}^{\infty }(R^{n})$ in the following norms respectively,%
\begin{equation*}
\left\Vert u\right\Vert _{W}:=\left\Vert \nabla u\right\Vert _{L_{\sigma
}^{p}(R^{n})}+\left\Vert u\right\Vert _{L_{\beta }^{2}(R^{n})}
\end{equation*}%
\begin{equation*}
=\left( \int\limits_{R^{n}}\sigma (x)\left\vert \nabla u(x)\right\vert
^{p}dx\right) ^{\frac{1}{p}}+\left( \int\limits_{R^{n}}\beta (x)\left\vert
u(x)\right\vert ^{2}dx)\right) ^{\frac{1}{2}},
\end{equation*}%
\begin{equation*}
\left\Vert u\right\Vert _{W_{b}}=\left\Vert u\right\Vert _{W}+\underset{x\in
R^{n}}{\sup }\left\vert u(x)\right\vert .
\end{equation*}
\end{definition}

One can show that $W$ is a separable, reflexive Banach space and $W_{b}$ is
a separable Banach space. Now, before giving the definition of the weak
solution of problem (1.1)-(1.2), let us define the operator $A$ $%
:W\rightarrow W^{\ast }$ as $A\varphi =-div(\sigma (x)\left\vert \nabla
\varphi \right\vert ^{p-2}\nabla \varphi )+\beta (x)\varphi $, where $%
W^{\ast }$ is the dual of $W$. It is easy to show that the operator $%
A:W\rightarrow W^{\ast }$ is bounded, monotone and hemicontinuous.

\begin{definition}
The function $u\in C([0,T];L^{2}(R^{n}))\cap L^{p}(0;T;W)\cap
W_{loc}^{1,2}(0,T;L^{2}(R^{n}))$, satisfying $\int\limits_{0}^{T}\int%
\limits_{R^{n}}f(u(t,x))u(t,x)dxdt<\infty ,\ u(0,x)=u_{0}(x)$ and the
equation%
\begin{equation*}
\left\langle u_{t},v\right\rangle +\left\langle Au,v\right\rangle
+\left\langle f(u),v\right\rangle =\left\langle g,v\right\rangle \text{ \
a.e. \ on }(0,T)\text{,}
\end{equation*}%
for all $v\in W\cap L^{\infty }(R^{n})$, is called the weak solution to
problem (1.1)-(1.2), where $\left\langle \cdot ,\cdot \right\rangle $ is the
dual form between $W$ and $W^{\ast }$.
\end{definition}

\begin{remark}
To give a meaning to the third term on the left hand side of the equality
given in the definition, it is enough to see that $f(u)$ $\in
L^{1}(0,T;L^{1}(R^{n}))+L^{2}(0,T;L^{2}(R^{n}))$. Let $\chi _{\Omega _{1}}$
and $\chi _{\Omega _{2}}$ be the characteristic functions of the sets
\begin{equation*}
\Omega _{1}=\{(t,x)\in (0,T)\times R^{n}:\left\vert u(t,x)\right\vert >1\},
\end{equation*}%
\begin{equation*}
\Omega _{2}=\{(t,x)\in (0,T)\times R^{n}:\left\vert u(t,x)\right\vert \leq
1\}.
\end{equation*}%
Since%
\begin{equation*}
\int\limits_{0}^{T}\int\limits_{R^{n}}\left\vert f(u(t,x))\chi _{\Omega
_{1}}(t,x)\right\vert dxdt=\iint\limits_{\Omega _{1}}\left\vert
f(u(t,x))\right\vert dxdt
\end{equation*}%
\begin{equation*}
\leq \iint\limits_{\Omega _{1}}f(u(t,x))u(t,x)dxdt\leq
\int\limits_{0}^{T}\int\limits_{R^{n}}f(u(t,x))u(t,x)dxdt<\infty ,
\end{equation*}%
we get $f(u)\chi _{\Omega _{1}}\in $ $L^{1}(0,T;L^{1}(R^{n}))$ . On the
other hand, since
\begin{equation*}
\int\limits_{0}^{T}\int\limits_{R^{n}}\left\vert f(u(t,x))\chi _{\Omega
_{2}}(t,x)\right\vert ^{2}dxdt=\iint\limits_{\Omega _{2}}\left\vert
f(u(t,x))\right\vert ^{2}dxdt
\end{equation*}%
\begin{equation*}
=\iint\limits_{\Omega _{2}}\left\vert \left( f(u(t,x))-f(0)\right)
\right\vert ^{2}dxdt,
\end{equation*}%
by Mean Value theorem we have%
\begin{equation*}
\int\limits_{0}^{T}\int\limits_{R^{n}}\left\vert f(u(t,x))\chi _{\Omega
_{2}}(t,x)\right\vert ^{2}dxdt\leq C\iint\limits_{\Omega _{2}}\left\vert
u(t,x)\right\vert ^{2}dxdt\leq
C\int\limits_{0}^{T}\int\limits_{R^{n}}\left\vert u(t,x)\right\vert ^{2}dxdt.
\end{equation*}%
Taking into account that $u\in L^{\infty }(0;T;L^{2}(R^{n}))$, we get $%
f(u)\chi _{\Omega _{2}}\in L^{2}(0,T;L^{2}(R^{n}))$. Since%
\begin{equation*}
f(u(t,x))=f(u(t,x))\chi _{\Omega _{1}}(t,x)+f(u(t,x))\chi _{\Omega
_{2}}(t,x).
\end{equation*}%
we have $f(u)$ $\in L^{1}(0,T;L^{1}(R^{n}))+L^{2}(0,T;L^{2}(R^{n})).$
\end{remark}

\begin{lemma}
The inequality
\begin{equation*}
\left\Vert \nabla u\right\Vert _{L^{\frac{2n}{n+2}}(B(0,2r))}^{2}+\left\Vert
u\right\Vert _{L^{2}(R^{n}\backslash B(0,r))}^{2}\geq C\left\Vert
u\right\Vert _{L^{2}(R^{n})}^{2}
\end{equation*}%
is satisfied for all $u\in C_{0}^{\infty }(R^{n})$ and $r>0,$ where $%
B(0,r)=\{x:x\in R^{n},\ \ |x|<r\}$ and the positive constant $C$ depends on $%
r$ and $n$.
\end{lemma}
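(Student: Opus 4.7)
The plan is to prove this via a cutoff argument combined with the Sobolev embedding $\dot W^{1,q}(R^n) \hookrightarrow L^2(R^n)$, where $q=\frac{2n}{n+2}$. Note that the Sobolev conjugate of $q=\frac{2n}{n+2}$ is precisely $q^{*}=2$, so one has the global inequality
\begin{equation*}
\|v\|_{L^{2}(R^{n})}\leq C_{n}\,\|\nabla v\|_{L^{\frac{2n}{n+2}}(R^{n})}\qquad \forall\,v\in C_{0}^{\infty }(R^{n}).
\end{equation*}
This already indicates that the $L^{\frac{2n}{n+2}}$-norm of $\nabla u$ is the right object to control the $L^{2}$-norm of $u$; the task is to localize that control to the ball $B(0,2r)$, compensating on the outside by the remainder term $\|u\|_{L^{2}(R^{n}\setminus B(0,r))}$.

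I would begin by fixing a smooth cutoff $\eta \in C_{0}^{\infty }(R^{n})$ with $0\leq \eta \leq 1$, $\eta \equiv 1$ on $B(0,r)$, $\mathrm{supp}\,\eta \subset B(0,2r)$, and $|\nabla \eta |\leq C/r$. Writing $u=\eta u+(1-\eta )u$, the second summand is supported in $R^{n}\setminus B(0,r)$ and contributes only to the second term on the left hand side, while $\eta u\in C_{0}^{\infty }(B(0,2r))$ allows the application of the global Sobolev inequality above. Invoking it on $\eta u$ and then expanding $\nabla (\eta u)=\eta \nabla u+u\nabla \eta $ gives
\begin{equation*}
\|\eta u\|_{L^{2}(R^{n})}\leq C_{n}\,\|\nabla u\|_{L^{\frac{2n}{n+2}}(B(0,2r))}+C_{n}\,\|u\nabla \eta \|_{L^{\frac{2n}{n+2}}(B(0,2r)\setminus B(0,r))}.
\end{equation*}

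The only remaining term to control is the one involving $u\nabla \eta $. Since $\nabla \eta $ is bounded and supported in the annulus $B(0,2r)\setminus B(0,r)$, H\"older's inequality with exponents $\frac{2n}{n+2}$ and $2$ (using that the annulus has finite measure depending on $r$ and $n$) yields
\begin{equation*}
\|u\nabla \eta \|_{L^{\frac{2n}{n+2}}(B(0,2r)\setminus B(0,r))}\leq C(r,n)\,\|u\|_{L^{2}(R^{n}\setminus B(0,r))}.
\end{equation*}
Putting the pieces together with the triangle inequality $\|u\|_{L^{2}}\leq \|\eta u\|_{L^{2}}+\|(1-\eta )u\|_{L^{2}}$ gives $\|u\|_{L^{2}(R^{n})}\leq A\|\nabla u\|_{L^{2n/(n+2)}(B(0,2r))}+B\|u\|_{L^{2}(R^{n}\setminus B(0,r))}$ with constants $A,B$ depending only on $r$ and $n$. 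Squaring and using $(a+b)^{2}\leq 2a^{2}+2b^{2}$ yields the claimed inequality with $C=(2\max\{A^{2},B^{2}\})^{-1}$.

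There is no serious obstacle here; the only potentially subtle point is remembering that the Sobolev conjugate of $\frac{2n}{n+2}$ is exactly $2$, which explains why the given exponent appears and is sharp in the sense that any smaller exponent would produce an inequality with a power of $\|u\|_{L^{2}}$ whose proof would require a different Sobolev-Gagliardo-Nirenberg argument. The rest is a standard localization via cutoff together with H\"older on a finite-measure annulus.
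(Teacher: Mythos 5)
Your argument is correct and is essentially the proof in the paper: both use a cutoff supported in $B(0,2r)$ equal to $1$ on $B(0,r)$, apply the Sobolev inequality with exponent $\frac{2n}{n+2}$ (whose conjugate is $2$) to the truncated function, expand $\nabla(\eta u)$ by the product rule, and control the commutator term by H\"older on the finite-measure annulus. The only cosmetic difference is that you split $u=\eta u+(1-\eta)u$ and use the triangle inequality, whereas the paper bounds $\Vert u\Vert_{L^{2}(B(0,r))}$ directly by $\Vert \eta u\Vert_{L^{2}(R^{n})}$ and then adds $\Vert u\Vert_{L^{2}(R^{n}\setminus B(0,r))}^{2}$ to both sides.
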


\begin{proof}
Let $\varphi (\cdot )\in C_{0}^{\infty }(R^{n})$ be such that $0\leq \varphi
(x)\leq 1$ and%
\begin{equation*}
\varphi (x)=\left\{
\begin{array}{c}
1,\text{ \ \ \ \ \ }x\in B(0,1), \\
\text{\ }0,\text{ \ }x\in R^{n}\backslash B(0,2),%
\end{array}%
\right.
\end{equation*}%
furthermore define $\varphi _{r}(x)=\varphi (\frac{x}{r})$ and $\widetilde{u}%
(x)=(u\varphi _{r})(x)$. By the Sobolev inequality we have%
\begin{equation}
\left\Vert \widetilde{u}\right\Vert _{L^{2}(R^{n})}^{2}\leq c_{1}\left\Vert
\nabla \widetilde{u}\right\Vert _{L^{\frac{2n}{n+2}}(R^{n})}^{2}.  \tag{2.1}
\end{equation}%
Now, since%
\begin{equation}
\left\Vert u\right\Vert _{L^{2}(B(0,r))}^{2}=\left\Vert \widetilde{u}%
\right\Vert _{L^{2}(B(0,r))}^{2}\leq \left\Vert \widetilde{u}\right\Vert
_{L^{2}(R^{n})}^{2},  \tag{2.2}
\end{equation}%
and
\begin{equation*}
\left( \int\limits_{R^{n}}\left\vert \nabla \widetilde{u}(x)\right\vert ^{%
\frac{2n}{n+2}}dx\right) ^{\frac{n+2}{n}}=\left(
\int\limits_{R^{n}}\left\vert \nabla (u\varphi _{r})(x)\right\vert ^{\frac{2n%
}{n+2}}dx\right) ^{\frac{n+2}{n}}
\end{equation*}%
\begin{equation*}
\leq c_{2}\left( \left( \int\limits_{R^{n}}\left\vert \nabla u(x)\left\vert
^{\frac{2n}{n+2}}\right\vert \varphi _{r}(x)\right\vert ^{\frac{2n}{n+2}%
}dx\right) ^{\frac{n+2}{n}}+\frac{1}{r^{2}}\left(
\int\limits_{R^{n}}\left\vert u\left\vert ^{\frac{2n}{n+2}}\right\vert
(\nabla \varphi )(\frac{x}{r})\right\vert ^{\frac{2n}{n+2}}dx\right) ^{\frac{%
n+2}{n}}\right)
\end{equation*}%
\begin{equation}
\leq c_{3}\left( \left\Vert \nabla u\right\Vert _{L^{\frac{2n}{n+2}%
}(B(0,2r))}^{2}+\left\Vert u\right\Vert _{L^{2}(B(0,2r)\backslash
B(0,r))}^{2}\right) ,  \tag{2.3}
\end{equation}%
from (2.1)-(2.3), we obtain%
\begin{equation*}
\left\Vert u\right\Vert _{L^{2}(B(0,r))}^{2}\leq c_{4}\left( \left\Vert
\nabla u\right\Vert _{L^{\frac{2n}{n+2}}(B(0,2r))}^{2}+\left\Vert
u\right\Vert _{L^{2}(B(0,2r)\backslash B(0,r))}^{2}\right) .
\end{equation*}%
By adding $\left\Vert u\right\Vert _{L^{2}(R^{n}\backslash B(0,r))}^{2}$ to
the both sides of the above inequality we get the claim of the lemma.
\end{proof}

\begin{lemma}
Assume that the conditions (1.3)-(1.4) are satisfied. Then for all $u\in
C_{0}^{\infty }(R^{n})$ the inequality
\begin{equation*}
\left\Vert u\right\Vert _{L^{2}(R^{n})}\leq \overline{C}\left\Vert
u\right\Vert _{W},
\end{equation*}%
which yields $W\subset L^{2}(R^{n})$, is satisfied.
\end{lemma}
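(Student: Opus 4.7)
The plan is to reduce everything to the previous lemma (applied with $r=r_0$), which yields
\begin{equation*}
\|u\|_{L^{2}(R^{n})}^{2}\le C^{-1}\Bigl(\|\nabla u\|_{L^{2n/(n+2)}(B(0,2r_{0}))}^{2}+\|u\|_{L^{2}(R^{n}\setminus B(0,r_{0}))}^{2}\Bigr).
\end{equation*}
It is then enough to control each of these two terms separately by $\|u\|_{W}$.

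The second term is straightforward: condition (1.4) gives $\beta(x)\ge\beta_{0}>0$ a.e.\ on $\{|x|\ge r_{0}\}$, so
\begin{equation*}
\|u\|_{L^{2}(R^{n}\setminus B(0,r_{0}))}^{2}\le\frac{1}{\beta_{0}}\int_{|x|\ge r_{0}}\beta(x)|u(x)|^{2}dx\le\frac{1}{\beta_{0}}\|u\|_{L_{\beta}^{2}(R^{n})}^{2}.
\end{equation*}

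The first term is handled by H\"older's inequality, written with the splitting $|\nabla u|^{2n/(n+2)}=\bigl(\sigma^{2n/((n+2)p)}|\nabla u|^{2n/(n+2)}\bigr)\cdot\sigma^{-2n/((n+2)p)}$. Choosing the conjugate exponents $r=(n+2)p/(2n)$ and $s=(n+2)p/(n(p-2)+2p)$ (so that $1/r+1/s=1$), the first factor raised to $r$ gives exactly $\sigma|\nabla u|^{p}$ and the second factor raised to $s$ gives exactly $\sigma^{-2n/(n(p-2)+2p)}$, which is the weight whose local integrability is postulated in (1.3). A short computation shows $(1/r)\cdot(n+2)/n=2/p$, so after raising to the power $(n+2)/n$,
\begin{equation*}
\|\nabla u\|_{L^{2n/(n+2)}(B(0,2r_{0}))}^{2}\le C_{1}\Bigl(\int_{B(0,2r_{0})}\sigma^{-2n/(n(p-2)+2p)}dx\Bigr)^{\!(n+2)/(ns)}\|\nabla u\|_{L_{\sigma}^{p}(R^{n})}^{2}.
\end{equation*}
The bracketed integral is finite by (1.3), so this term is dominated by a constant multiple of $\|\nabla u\|_{L_{\sigma}^{p}(R^{n})}^{2}$.

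Combining these two bounds yields $\|u\|_{L^{2}(R^{n})}^{2}\le\overline{C}^{2}\bigl(\|\nabla u\|_{L_{\sigma}^{p}(R^{n})}+\|u\|_{L_{\beta}^{2}(R^{n})}\bigr)^{2}=\overline{C}^{2}\|u\|_{W}^{2}$, which is the claim. The only delicate point is matching exponents in the H\"older step so that the weight $\sigma^{-2n/(n(p-2)+2p)}$ appears precisely in the form assumed in (1.3); this explains the seemingly ad hoc exponent chosen in condition (1.3), and also clarifies why Lemma 2.1 was formulated with the Sobolev-conjugate exponent $2n/(n+2)$ on the gradient.
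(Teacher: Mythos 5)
Your argument is correct and is essentially the paper's own proof: the authors likewise apply Lemma 2.1, bound the exterior term via $\beta(x)\ge\beta_{0}$ on $\{|x|\ge r_{0}\}$, and bound $\|\nabla u\|_{L^{2n/(n+2)}(B(0,r))}$ by H\"older's inequality with exactly your conjugate exponents, noting that $p(n+2)-2n=n(p-2)+2p$ so the weight $\sigma^{-2n/(n(p-2)+2p)}$ from (1.3) appears. No substantive difference.
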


\begin{proof}
The Holder inequality yields
\begin{equation*}
\int\limits_{B(0,r)}\left\vert \nabla u(x)\right\vert ^{\frac{2n}{n+2}%
}dx\leq \left( \int\limits_{B(0,r)}\sigma (x)\left\vert \nabla
u(x)\right\vert ^{p}dx\right) ^{\frac{2n}{p(n+2)}}\left(
\int\limits_{B(0,r)}\sigma ^{-\frac{2n}{p(n+2)-2n}}(x)dx\right) ^{\frac{%
p(n+2)-2n}{p(n+2)}},
\end{equation*}%
for every $r>0$. From the assumption (1.3) it follows that%
\begin{equation}
\left\Vert \nabla u\right\Vert _{L^{\frac{2n}{n+2}}(B(0,r))}\leq
c(r)\left\Vert u\right\Vert _{W},  \tag{2.4}
\end{equation}%
On the other hand by condition (1.4), we have
\begin{equation}
\left\Vert u\right\Vert _{L^{2}(R^{n}\backslash B(0,r_{0}))}\leq \beta_{0}^{-%
\frac{1}{2}}\left\Vert u\right\Vert _{W}.  \tag{2.5}
\end{equation}%
Taking into account the previous lemma and (2.4) - (2.5), we obtain the
result.
\end{proof}

\begin{lemma}
Let the conditions (1.3)-(1.4) are satisfied. Then $B_{k}:W\rightarrow W$ is
a continuous map and $\underset{k\rightarrow \infty }{\lim }\left\Vert
u-B_{k}(u)\right\Vert _{W}=0$, for every $u\in W$, where $B_{k}(s)=\left\{
\begin{array}{c}
k,\text{ \ \ }s>k, \\
s,\text{ \ }\left\vert s\right\vert \leq k, \\
-k,\text{ \ }s<-k,%
\end{array}%
\right. $ for $s\in R$.
\end{lemma}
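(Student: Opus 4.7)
The plan is to exploit three elementary properties of the truncation $B_k$: it is Lipschitz with constant one, it satisfies $|B_k(s)|\le \min(|s|,k)$, and it admits the weak chain rule $\nabla B_k(u)=\chi_{\{|u|\le k\}}\nabla u$. The last identity rests on the Stampacchia-type fact that $\nabla u=0$ almost everywhere on $\{|u|=k\}$, valid once $u$ is known to have a weak gradient in the ordinary sense; the H\"older computation in (2.4) places every $u\in W$ in $W^{1,2n/(n+2)}_{\mathrm{loc}}(R^{n})$, so this holds here. These three facts immediately yield $B_k(u)\in W$: if $\varphi_{n}\in C_{0}^{\infty}(R^{n})$ converges to $u$ in $W$, then each $B_k(\varphi_{n})$ is Lipschitz with compact support and may be approximated in $W$-norm by $C_{0}^{\infty}$ functions via mollification, while the continuity argument below shows $B_k(\varphi_{n})\to B_k(u)$ in $W$.

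For continuity, fix $u_{n}\to u$ in $W$. The $L_{\beta}^{2}$-piece is immediate from the pointwise bound $|B_k(u_{n})-B_k(u)|\le |u_{n}-u|$. For the gradient I use the splitting
\[
\nabla B_k(u_{n})-\nabla B_k(u)=\chi_{\{|u_{n}|\le k\}}\bigl(\nabla u_{n}-\nabla u\bigr)+\bigl(\chi_{\{|u_{n}|\le k\}}-\chi_{\{|u|\le k\}}\bigr)\nabla u.
\]
The first summand has $L_{\sigma}^{p}$-norm at most $\|\nabla u_{n}-\nabla u\|_{L_{\sigma}^{p}}\to 0$. For the second, pass to a subsequence so that $u_{n}\to u$ almost everywhere; then $\chi_{\{|u_{n}|\le k\}}-\chi_{\{|u|\le k\}}\to 0$ pointwise on the set $\{|u|\ne k\}$, while on $\{|u|=k\}$ the factor $\nabla u$ itself vanishes a.e.\ by Stampacchia. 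Dominated convergence with dominant $2|\nabla u|\in L_{\sigma}^{p}$ concludes, and the standard subsequence-of-subsequences argument upgrades the conclusion to the full sequence.

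For the second claim, observe that $u-B_k(u)$ is supported in $\{|u|>k\}$ with $|u-B_k(u)|\le |u|$ and $\nabla(u-B_k(u))=\chi_{\{|u|>k\}}\nabla u$. Since $\beta|u|^{2}$ and $\sigma|\nabla u|^{p}$ are in $L^{1}(R^{n})$ and $\chi_{\{|u|>k\}}\to 0$ a.e.\ as $k\to\infty$, dominated convergence gives
\[
\int_{R^{n}}\beta(x)|u-B_k(u)|^{2}\,dx+\int_{R^{n}}\sigma(x)|\nabla(u-B_k(u))|^{p}\,dx\longrightarrow 0,
\]
which is the desired convergence in $W$-norm. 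The main obstacle is the second term in the gradient splitting from the continuity step: one must justify the weak chain rule and the Stampacchia identity for functions defined only by a closure procedure. This is precisely where the local Sobolev regularity extracted from (2.4) becomes indispensable; once granted, everything else reduces to routine applications of dominated convergence.
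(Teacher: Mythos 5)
Your proof is correct, and it shares the paper's overall architecture: reduce to $C_{0}^{\infty}$ approximants, control the zero-order part by the Lipschitz bound $|B_k(s)-B_k(t)|\le |s-t|$, control the gradient part by exploiting that $\nabla u=0$ a.e.\ on the level set $\{|u|=k\}$ (which both arguments legitimize through the local Sobolev regularity supplied by (2.4)), and obtain $B_k(u)\in W$ by mollifying the truncation of a smooth function. Where you genuinely diverge is the mechanism for the gradient convergence. The paper splits $R^{n}$ into a large ball and its complement, invokes Egorov's theorem to get uniform convergence of $\nabla u_{m_j}$ off a small exceptional set, and squeezes the residual contribution into the shrinking band $\{k-\delta\le |u|\le k+\delta\}$; you instead write the exact weak chain rule $\nabla B_k(v)=\chi_{\{|v|\le k\}}\nabla v$ and reduce everything to two applications of dominated convergence plus the subsequence-of-subsequences upgrade. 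Your route is shorter, makes the role of the Stampacchia identity transparent, and supplies an actual proof of the $k\to\infty$ claim, which the paper dismisses as easy. The price is that you must justify the chain rule for elements of the abstract completion $W$, i.e.\ that the ``gradient'' entering the $W$-norm coincides with the distributional gradient of $u\in W_{loc}^{1,\frac{2n}{n+2}}(R^{n})$; you correctly identify (2.4) as the source of this identification (and (1.3) forces $\sigma>0$ a.e., so the weighted limit determines the gradient almost everywhere). Your mollification step for $B_k(\varphi)$, $\varphi\in C_{0}^{\infty}(R^{n})$, is only sketched, but the dominated-convergence argument it requires (bounded, compactly supported gradients together with $\sigma\in L_{loc}^{1}$) is exactly the paper's (2.15)--(2.16), so this is not a gap.
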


\begin{proof}
By the definition of $W,$ for any $u\in W$ there exists a sequence $\left\{
u_{m}\right\} _{m=1}^{\infty }\subset C_{0}^{\infty }(R^{n})$ such that%
\begin{equation}
\underset{m\rightarrow \infty }{\lim }\left\Vert u-u_{m}\right\Vert _{W}=0,
\tag{2.6}
\end{equation}%
which according to the Lemma 2.2 yields%
\begin{equation}
\underset{m\rightarrow \infty }{\lim }\left\Vert u-u_{m}\right\Vert
_{L^{2}(R^{n})}=0.  \tag{2.7}
\end{equation}%
Now, let us show that
\begin{equation}
\underset{m\rightarrow \infty }{\lim }\left\Vert
B_{k}(u)-B_{k}(u_{m})\right\Vert _{W}=0.  \tag{2.8}
\end{equation}%
Since
\begin{equation*}
\left\vert B_{k}(u)-B_{k}(v)\right\vert \leq \left\vert u-v\right\vert ,%
\text{ \ \ \ }\forall u,v\in R,
\end{equation*}%
by (1.4) and (2.7), we have%
\begin{equation}
\underset{m\rightarrow \infty }{\lim \sup }\int\limits_{R^{n}}\beta
(x)\left\vert B_{k}(u)(x)-B_{k}(u_{m})(x)\right\vert ^{2}dx\leq \left\Vert
\beta \right\Vert _{L^{\infty }(R^{n})}\underset{m\rightarrow \infty }{\lim
\sup }\left\Vert u-u_{m}\right\Vert _{L^{2}(R^{n})}^{2}=0.  \tag{2.9}
\end{equation}%
By (2.6), it follows that
\begin{equation}
\underset{m\rightarrow \infty }{\lim }\int\limits_{E}\sigma (x)\left\vert
\nabla u_{m}(x)\right\vert ^{p}dx=\int\limits_{E}\sigma (x)\left\vert \nabla
u(x)\right\vert ^{p}dx,  \tag{2.10}
\end{equation}%
for every measurable $E\subset R^{n}$. By the last equality, we find%
\begin{equation*}
\underset{r\rightarrow \infty }{\lim \sup }\underset{m\rightarrow \infty }{%
\lim \sup }\int\limits_{R^{n}\backslash B(0,r)}\sigma (x)\left\vert \nabla
B_{k}(u)(x)-\nabla B_{k}(u_{m})(x)\right\vert ^{p}dx
\end{equation*}%
\begin{equation*}
\leq 2^{p}\underset{r\rightarrow \infty }{\lim \sup }\underset{m\rightarrow
\infty }{\lim \sup }\left( \int\limits_{R^{n}\backslash B(0,r)}\sigma
(x)\left\vert \nabla u_{m}\right\vert ^{p}dx+\int\limits_{R^{n}\backslash
B(0,r)}\sigma (x)\left\vert \nabla u\right\vert ^{p}dx\right)
\end{equation*}%
\begin{equation}
=2^{p+1}\underset{r\rightarrow \infty }{\lim \sup }\int\limits_{R^{n}%
\backslash B(0,r)}\sigma (x)\left\vert \nabla u\right\vert ^{p}dx=0.
\tag{2.11}
\end{equation}%
By (2.4) and (2.6), there exists a subsequence $\left\{ u_{m_{j}}\right\}
_{j=1}^{\infty }$ such that
\begin{equation*}
\nabla u_{m_{j}}(x)\rightarrow \nabla u(x)\text{ \ a.e. on \ }B(0,r).
\end{equation*}%
Then by Egorov's theorem for any $\delta >0$ there exists a measurable $%
E_{\delta }\subset B(0,r)$ such that $mes(E_{\delta })<\delta $ and
\begin{equation}
\nabla u_{m_{j}}(x)\rightarrow \nabla u(x)\text{ \ uniformly on }%
B(0,r)\backslash E_{\delta }.  \tag{2.12}
\end{equation}%
By (2.10) and (2.12), we get%
\begin{equation*}
\underset{j\rightarrow \infty }{\lim \sup }\int\limits_{B(0,r)}\sigma
(x)\left\vert \nabla B_{k}(u)(x)-\nabla B_{k}(u_{m_{j}})(x)\right\vert
^{p}dx\leq 2^{p+1}\underset{\delta \rightarrow 0}{\lim \sup }%
\int\limits_{E_{\delta }}\sigma (x)\left\vert \nabla u(x)\right\vert ^{p}dx
\end{equation*}%
\begin{equation*}
+\underset{\delta \rightarrow 0}{\lim \sup }\underset{j\rightarrow \infty }{%
\lim \sup }\int\limits_{\{x:\text{ }k-\delta \leq \left\vert u(x)\right\vert
\leq k+\delta \}\cap (B(0,r)\backslash E_{\delta })}\sigma (x)\left\vert
\nabla B_{k}(u)(x)-\nabla B_{k}(u_{m_{j}})\right\vert ^{p}dx
\end{equation*}%
\begin{equation*}
\leq 2^{p+1}\underset{\delta \rightarrow 0}{\lim \sup }\int\limits_{\{x:%
\text{ }k-\delta \leq \left\vert u(x)\right\vert \leq k+\delta \}\cap
B(0,r)}\sigma (x)\left\vert \nabla u(x)\right\vert ^{p}dx
\end{equation*}%
\begin{equation*}
=2^{p+1}\int\limits_{\{x:\text{ }\left\vert u(x)\right\vert =k\}\cap
B(0,r)}\sigma (x)\left\vert \nabla u(x)\right\vert ^{p}dx
\end{equation*}%
\begin{equation*}
\leq 2^{p+1}\int\limits_{\{x:\text{ }u(x)=k\}}\sigma (x)\left\vert \nabla
u(x)\right\vert ^{p}dx+2^{p+1}\int\limits_{\{x:\text{ }u(x)=-k\}}\sigma
(x)\left\vert \nabla u(x)\right\vert ^{p}dx=0.
\end{equation*}%
By the same way, one can show that every subsequence of $\left\{
u_{m}\right\} _{m=1}^{\infty }$ has a subsequence satisfying the above
equality. So, we have
\begin{equation}
\underset{m\rightarrow \infty }{\lim }\int\limits_{B(0,r)}\sigma
(x)\left\vert \nabla B_{k}(u)(x)-\nabla B_{k}(u_{m})(x)\right\vert ^{p}dx=0.
\tag{2.13}
\end{equation}%
By (2.9), (2.11) and (2.13), we obtain (2.8).

Now, let us show that for any $u\in C_{0}^{\infty }(R^{n})$ and $k\in
%TCIMACRO{\U{2115} }%
%BeginExpansion
\mathbb{N}
%EndExpansion
$ there exists $\left\{ v_{m}\right\} _{m=1}^{\infty }\subset C_{0}^{\infty
}(R^{n})$ such that%
\begin{equation}
\underset{m\rightarrow \infty }{\lim }\left\Vert B_{k}(u)-v_{m}\right\Vert
_{W}=0.  \tag{2.14}
\end{equation}%
Denote $v_{m}(x)=(\rho _{m}\ast B_{k}(u))(x)$, where $\rho _{m}(x)=\left\{
\begin{array}{c}
Km^{n}e^{-\frac{1}{1-m^{2}\left\vert x\right\vert ^{2}}},\text{ \ }%
\left\vert x\right\vert <\frac{1}{m}, \\
0,\text{ \ \ \ \ \ \ \ \ \ \ \ \ \ \ \ }\left\vert x\right\vert \geq \frac{1%
}{m}%
\end{array}%
\right. $, $m\in
%TCIMACRO{\U{2115} }%
%BeginExpansion
\mathbb{N}
%EndExpansion
$ and $K^{-1}=\int_{\left\{ x:\text{ }\left\vert x\right\vert <1\right\}
}e^{-\frac{1}{1-\left\vert x\right\vert ^{2}}}dx$. Since $u\in C_{0}^{\infty
}(R^{n})$, by the definition of the function $B_{k}(\cdot )$, we have $%
B_{k}(u)\in W_{0}^{1,\infty }(B(0,r))\cap C_{0}(B(0,r))$ and $v_{m}\in
C_{0}^{\infty }(B(0,r+1))$, for some $r>0$. It is well known that
\begin{equation}
\underset{m\rightarrow \infty }{\lim }\left\Vert B_{k}(u)-v_{m}\right\Vert
_{W^{1,2}(R^{n})}=0.  \tag{2.15}
\end{equation}%
By (1.4) and (2.15), we obtain%
\begin{equation}
\underset{m\rightarrow \infty }{\lim \sup }\int\limits_{R^{n}}\beta
(x)\left\vert v_{m}(x)-B_{k}(u)(x)\right\vert ^{2}dx=0.  \tag{2.16}
\end{equation}%
Also by (2.15), we have
\begin{equation*}
\nabla v_{m}\rightarrow \nabla B_{k}(u)\text{ in measure on }R^{n}\text{.}
\end{equation*}%
Since
\begin{equation*}
\left\Vert \nabla v_{m}\right\Vert _{L^{\infty }(R^{n})}\leq \left\Vert
\nabla B_{k}(u)\right\Vert _{L^{\infty }(R^{n})}\leq c,\text{ \ \ \ }\forall
m,k\in
%TCIMACRO{\U{2115} }%
%BeginExpansion
\mathbb{N}
%EndExpansion
,
\end{equation*}%
applying Lebesgue's convergence theorem, we get%
\begin{equation*}
\underset{m\rightarrow \infty }{\lim }\int\limits_{R^{n}}\sigma
(x)\left\vert \nabla v_{m}(x)-\nabla B_{k}(u)(x)\right\vert ^{p}dx
\end{equation*}%
\begin{equation*}
=\underset{m\rightarrow \infty }{\lim }\int\limits_{B(0,r+1)}\sigma
(x)\left\vert \nabla v_{m}(x)-\nabla B_{k}(u)(x)\right\vert ^{p}dx=0.
\end{equation*}%
which together with (2.16) yields (2.14). By (2.8) and (2.14), for every $%
u\in W$ and $k\in
%TCIMACRO{\U{2115} }%
%BeginExpansion
\mathbb{N}
%EndExpansion
$ there exists a sequence $\left\{ u_{m}\right\} _{m=1}^{\infty }\subset
C_{0}^{\infty }(R^{n})$ converging to $B_{k}(u)$ in the norm of $W$. It
means that $B_{k}(u)\in W$, for every $u\in W$.

Now, using the argument done in the proof of (2.8), one can prove that if $%
\left\{ u_{m}\right\} _{m=1}^{\infty }$ converges to $u$ in $W$ as $%
m\rightarrow \infty $, then $\left\{ B_{k}(u_{m})\right\} _{m=1}^{\infty }$
converges to $B_{k}(u)$ in $W$ as $m\rightarrow \infty $. Also, by the
definition of the function $B_{k}(\cdot )$, it is easy to show that $\left\{
B_{k}(u)\right\} _{k=1}^{\infty }$ converges to $u$ in $W$ as $k\rightarrow
\infty $.
\end{proof}

\begin{remark}
By (2.8) and (2.14), for every $u\in W$ and $k\in
%TCIMACRO{\U{2115} }%
%BeginExpansion
\mathbb{N}
%EndExpansion
$ there exists a sequence $\left\{ v_{m}\right\} _{m=1}^{\infty }\subset
C_{0}^{\infty }(R^{n})$ such that
\begin{equation*}
\underset{m\rightarrow \infty }{\lim }\left\Vert B_{k}(u)-v_{m}\right\Vert
_{W}=0\text{ \ and \ }\underset{m}{\sup }\left\Vert v_{m}\right\Vert
_{L^{\infty }(R^{n})}\leq k.
\end{equation*}%
On the other hand, by the definition of $B_{k}(\cdot )$, for every $u\in
L^{\infty }(R^{n})$ and $k\geq \left\Vert u\right\Vert _{L^{\infty }(R^{n})}$%
\begin{equation*}
B_{k}(u)=u\text{ \ a.e. on \ }R^{n}.
\end{equation*}%
Hence, for every \ $u\in W\cap $ $L^{\infty }(R^{n})$ there exists a
sequence $\left\{ w_{m}\right\} _{m=1}^{\infty }\subset C_{0}^{\infty
}(R^{n})$ such that%
\begin{equation*}
\underset{m\rightarrow \infty }{\lim }\left\Vert u-w_{m}\right\Vert _{W}=0%
\text{ \ and \ }\underset{m}{\sup }\left\Vert w_{m}\right\Vert _{L^{\infty
}(R^{n})}\leq \left\Vert u\right\Vert _{L^{\infty }(R^{n})}+1.
\end{equation*}
\end{remark}

\section{Well-posedness}

We prove the existence of the weak solution to problem (1.1)-(1.2) by
Galerkin's method.

\begin{theorem}
Assume that the conditions (1.3)-(1.5) are satisfied. Then for any $u_{0}\in
L^{2}(R^{n})$ and $T>0,$ there exists a weak solution to (1.1)-(1.2).
\end{theorem}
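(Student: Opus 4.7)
The plan is to use a Faedo--Galerkin scheme in which the basis is drawn from $C_{0}^{\infty}(R^{n})$, which by the definition of $W$ is dense in $W$ and also lies in $W\cap L^{\infty}(R^{n})$, so that the approximants are admissible as test functions in Definition 2.2. Choose an increasing sequence of subspaces $V_{m}=\operatorname{span}\{w_{1},\dots,w_{m}\}\subset C_{0}^{\infty}(R^{n})$ whose union is dense in $W$, and seek $u_{m}(t,x)=\sum_{j=1}^{m}c_{j}^{m}(t)w_{j}(x)$ solving
\begin{equation*}
(u_{mt},w_{j})+\langle Au_{m},w_{j}\rangle+(f(u_{m}),w_{j})=(g,w_{j}),\quad j=1,\dots,m,
\end{equation*}
with $u_{m}(0)$ the $L^{2}$-projection of $u_{0}$ onto $V_{m}$. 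Local existence of $(c_{j}^{m})$ is routine Carath\'eodory theory, since the $w_{j}$ are smooth and bounded.

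Testing with $u_{m}$ and using the monotonicity of $A$, the sign condition $f(u_{m})u_{m}\geq 0$, and Young's inequality to absorb $(g,u_{m})$, I would derive the a priori bound
\begin{equation*}
\|u_{m}\|_{L^{\infty}(0,T;L^{2}(R^{n}))}+\|u_{m}\|_{L^{p}(0,T;W)}+\int_{0}^{T}\!\int_{R^{n}}f(u_{m})u_{m}\,dxdt\leq C,
\end{equation*}
which gives global existence on $[0,T]$. Rewriting $u_{mt}=-Au_{m}-f(u_{m})+g$ and decomposing $f(u_{m})$ via the $\Omega_{1},\Omega_{2}$ splitting of Remark 2.1 then yields a uniform bound for $u_{mt}$ in $L^{p'}(0,T;W^{*})+L^{1}(0,T;L^{1}(R^{n}))+L^{2}(0,T;L^{2}(R^{n}))$.

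For compactness I would localize to balls $B(0,R)$: by (2.4) and the $L^{2}$-bound on $u_{m}$, the restrictions are uniformly bounded in $L^{p}(0,T;W^{1,2n/(n+2)}(B(0,R)))$, and Rellich--Kondrachov provides compact embedding into $L^{r}(B(0,R))$ for every $r<2$. Combined with the $u_{mt}$-estimate, Aubin--Lions furnishes a subsequence converging strongly in $L^{p}(0,T;L^{r}(B(0,R)))$, and a diagonal argument produces $u_{m}\to u$ a.e.\ on $(0,T)\times R^{n}$. Weak compactness in reflexive spaces then yields $u_{m}\rightharpoonup u$ in $L^{p}(0,T;W)$, weak-$*$ convergence in $L^{\infty}(0,T;L^{2}(R^{n}))$, and $Au_{m}\rightharpoonup\eta$ in $L^{p'}(0,T;W^{*})$.

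The main obstacle is identifying the weak limits of the two nonlinear terms, which is where the tools advertised in the introduction come in. For $f(u_{m})$, the uniform bound on $\int\!\int f(u_{m})u_{m}\,dxdt$ together with the sign condition and the continuity of $f$ gives equi-integrability on $(0,T)\times R^{n}$: on $\{|u_{m}|>M\}$ one has $|f(u_{m})|\leq f(u_{m})u_{m}/M$, while on $\{|u_{m}|\leq M\}$ the function $f$ is uniformly bounded. The Dunford--Pettis theorem (the weak compactness principle in the relevant Orlicz class) combined with the a.e.\ convergence then identifies the weak $L^{1}$ limit as $f(u)$. To pin down $\eta=Au$, I would apply the Minty--Browder monotonicity trick: start from the energy identity obtained by testing the approximate equation with $u_{m}$, pass to the $\limsup$ using the weak limits and Fatou on the $f$-term, and compare with $\langle Au_{m}-A\varphi,u_{m}-\varphi\rangle\geq 0$ for arbitrary $\varphi\in W\cap L^{\infty}(R^{n})$. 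Remark 2.2 is essential here, as it approximates such a $\varphi$ by $C_{0}^{\infty}(R^{n})$ functions with a uniform $L^{\infty}$ bound, which is precisely what keeps the $f(\varphi)$-term under control in the limit. Once both nonlinear limits are identified, $u$ satisfies the weak formulation of Definition 2.2, and the initial condition follows from $u_{m}(0)\to u_{0}$ in $L^{2}(R^{n})$.
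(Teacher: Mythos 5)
Your overall architecture (Galerkin in a $C_0^\infty$ basis, the three a priori bounds, local Aubin--Lions plus diagonalization for a.e.\ convergence, uniform integrability of $f(u_m)$ from the sign condition, Minty for the principal part) matches the paper's, and your Vitali/Dunford--Pettis identification of the weak $L^1_{loc}$ limit of $f(u_m)$ is a sound and arguably more elementary substitute for the paper's appeal to the weak compactness theorem in Orlicz spaces. But there is a genuine gap at the heart of the Minty step. To conclude $\chi=Au$ you need $\limsup_m\int_0^T\langle Au_m,u_m\rangle\,dt\le\int_0^T\langle\chi,u\rangle\,dt$. The Galerkin energy identity, Fatou on the $f$-term, and weak lower semicontinuity give a bound of the $\limsup$ by $\int\!\!\int gu-\int\!\!\int f(u)u+\frac12\|u_0\|^2-\frac12\|u(T)\|^2$, but converting this into $\int_0^T\langle\chi,u\rangle\,dt$ requires the energy \emph{equality} for the limit equation $u_t+\chi+f(u)=g$, and $u$ itself is not an admissible test function: the equation has only been identified against $v\in W\cap L^\infty(R^n)$, $u$ is unbounded, and the pairing of $f(u)\in L^1+L^2$ with $u$ needs justification. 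This is the main technical difficulty of the theorem (the paper flags it in the introduction), and it is resolved there by testing the limit equation with the truncations $B_k(u)$ --- admissible by Lemma 2.3 and Remark 2.1 --- and letting $k\to\infty$ via monotone convergence on the term $\int\!\!\int f(u)B_k(u)$. Your sketch of Minty with comparison functions $\varphi\in W\cap L^\infty$ and Remark 2.2 does not supply this step; note also that in the Minty comparison no $f(\varphi)$ term appears, since $f(u)$ has already been identified separately, so Remark 2.2 is not doing the work you assign to it.

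Two smaller omissions relative to Definition 2.2: you bound $u_{mt}$ only in $L^{p/(p-1)}(0,T;W^{*})+L^1(0,T;L^1)+L^2(0,T;L^2)$, whereas a weak solution must lie in $W^{1,2}_{loc}(0,T;L^2(R^n))$; the paper obtains the needed $L^2(\varepsilon,T;L^2(R^n))$ bound on $u_{mt}$ by a second energy estimate (multiplying by $c'_{mj}(t)$, introducing the primitive $F$, and averaging over the starting time $s$), and this extra regularity is also what legitimizes the $B_k(u)$-testing above. Finally, you do not address $u\in C([0,T];L^2(R^n))$, in particular strong continuity at $t=0$, which the paper derives by combining weak continuity with the inequality $\limsup_{t\to0}\Vert u(t)\Vert_{L^2}\le\Vert u_0\Vert_{L^2}$ obtained from the limit of (3.2).
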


\begin{proof}
Let us consider the approximate solutions $\{u_{m}(t)\}_{m=1}^{\infty }$ in
the form%
\begin{equation*}
u_{m}(t)=\sum\limits_{k=1}^{m}c_{mk}(t)e_{k},
\end{equation*}%
where $\{e_{j}\}_{j=1}^{\infty }$ $\subset C_{0}^{\infty }(R^{n})$ is a
basis of the space $W_{b}$ and the functions $\{c_{mk}(t)\}_{k=1}^{m}$ are
the solutions of the following problem :%
\begin{equation}
\left\{
\begin{array}{c}
\left\langle \sum\limits_{k=1}^{m}c_{mk}^{\prime
}(t)e_{k},e_{j}\right\rangle +\left\langle
A(\sum\limits_{k=1}^{m}c_{mk}(t)e_{k}),e_{j}\right\rangle +\left\langle
f(\sum\limits_{k=1}^{m}c_{mk}(t)e_{k}),e_{j}\right\rangle \\
=\left\langle g,e_{j}\right\rangle \text{, \ }t>0\text{, \ \ }j=1,...,m,%
\text{ \ \ \ \ \ \ \ \ \ \ \ \ \ \ \ \ \ \ \ \ \ \ \ \ \ \ \ \ \ \ \ \ \ \ \
\ \ \ \ \ \ \ \ \ \ \ \ \ \ } \\
\sum\limits_{k=1}^{m}c_{mk}(0)e_{k}\rightarrow u_{0}\text{ strongly in }%
L^{2}(R^{n})\text{ as }m\rightarrow \infty .\text{ \ \ \ \ \ \ \ \ \ \ \ \ \
\ \ \ \ \ \ \ \ \ \ \ }%
\end{array}%
\right.  \tag{3.1}
\end{equation}%
By the boundedness, monotonicity and hemicontinuity of $A:W\rightarrow
W^{\ast }$ it follows that this operator is demicontinuous (see \cite[Lemma
2.1 and Lemma 2.2, p. 38]{20}). So, since $det(\left\langle
e_{j},e_{k}\right\rangle )\neq 0$ and $f$ is continuous, by the Peano
existence theorem, there exists at least one local solution to (3.1) in the
interval $[0,T_{m})$. Multiplying the equation (3.1)$_{j}$, by the function $%
c_{mj}(t)$, for each $j$, adding these relations for \ $j=1,...,m$ and
integrating over $(0,t),$ we have%
\begin{equation*}
\left\Vert u_{m}(t)\right\Vert
_{L^{2}(R^{n})}^{2}+2\int\limits_{0}^{t}\int\limits_{R^{n}}\left( \sigma
(x)\left\vert \nabla u_{m}(\tau ,x)\right\vert ^{p}+\beta (x)\left\vert
u_{m}(\tau ,x)\right\vert ^{2}\right) dxd\tau
\end{equation*}%
\begin{equation*}
+2\int\limits_{0}^{t}\int\limits_{R^{n}}f(u_{m}(\tau ,x))u_{m}(\tau
,x)dxd\tau
\end{equation*}%
\begin{equation}
=2\int\limits_{0}^{t}\int\limits_{R^{n}}g(x)u_{m}(\tau ,x)dxd\tau
+\left\Vert u_{m}(0)\right\Vert _{L^{2}(R^{n})}^{2},\text{ \ \ \ }0\leq
t<T_{m}.  \tag{3.2}
\end{equation}%
Since by the last equality
\begin{equation}
\left\Vert u_{m}\right\Vert _{L^{\infty }(0,T_{m};L^{2}(R^{n}))}\leq c_{1},
\tag{3.3}
\end{equation}%
we can extend the approximate solution to the interval $[0,T]$, for every $%
T>0$. Taking into account (3.3) in (3.2), we get%
\begin{equation*}
\left\Vert u_{m}(t)\right\Vert
_{L^{2}(R^{n})}^{2}+\int\limits_{0}^{t}\int\limits_{R^{n}}\left( \sigma
(x)\left\vert \nabla u_{m}(\tau ,x)\right\vert ^{p}+\beta (x)\left\vert
u_{m}(\tau ,x)\right\vert ^{2}\right) dxd\tau
\end{equation*}%
\begin{equation}
+\int\limits_{0}^{t}\int\limits_{R^{n}}\widetilde{f}(u_{m}(\tau
,x))u_{m}(\tau ,x)dxd\tau \leq c_{2},\text{ \ }\forall t\in \lbrack 0,T],
\tag{3.4}
\end{equation}%
where $\widetilde{f}(u_{m}(t,x))=f(u_{m}(t,x))+cu_{m}(t,x)$ and $c$ is the
constant in condition (1.5). \ Now, multiplying equation (3.1)$_{j}$ by the
function $c_{mj}^{\prime }(t)$, for each $j$, adding these relations for \ $%
j=1,...,m$, integrating over $(s,T)$ and taking into account (3.3), we have%
\begin{equation*}
\int\limits_{s}^{T}\left\Vert u_{mt}(t)\right\Vert
_{L^{2}(R^{n})}^{2}dt+\int\limits_{R^{n}}F(u_{m}(T,x))dx\leq
\int\limits_{R^{n}}\sigma (x)\left\vert \nabla u_{m}(s,x)\right\vert ^{p}dx
\end{equation*}%
\begin{equation*}
+\int\limits_{R^{n}}\beta (x)\left\vert u_{m}(s,x)\right\vert
^{2}dx+\int\limits_{R^{n}}F(u_{m}(s,x))dx+c_{3},
\end{equation*}%
where $F(u)=\int\limits_{0}^{u}f(s)ds$. Integrating the last inequality over
$(0,T)$ with respect to the variable $s$ and taking into account (3.4), we
get%
\begin{equation}
\left\Vert u_{mt}\right\Vert _{L^{2}(\varepsilon ,T;L^{2}(R^{n}))}\leq
c_{4}(\varepsilon ),  \tag{3.5}
\end{equation}%
for every $\varepsilon \in (0,T).$ By the estimates (3.3)-(3.5) and the
boundedness of the operator $A:L^{p}(0,T;W)\rightarrow L^{\frac{p}{p-1}%
}(0,T;W^{\ast })$, we obtain (up to a subsequence) that%
\begin{equation}
\left\{
\begin{array}{c}
u_{m}\rightarrow u\text{ \ weakly star in }L^{\infty }(0,T;L^{2}(R^{n})),%
\text{ \ \ \ \ \ \ \ \ \ \ \ } \\
u_{mt}\rightarrow u_{t}\text{ \ weakly in }L^{2}(\varepsilon
,T;L^{2}(R^{n})),\text{ \ }\forall \varepsilon \in (0,T), \\
u_{m}\rightarrow u\text{ \ weakly\ in }L^{p}(0,T;W),\text{ \ \ \ \ \ \ \ \ \
\ \ \ \ \ \ \ \ \ \ \ \ \ \ \ } \\
Au_{m}\rightarrow \chi \text{ \ weakly\ in }L^{\frac{p}{p-1}}(0,T;W^{\ast }),%
\text{ \ \ \ \ \ \ \ \ \ \ \ \ \ \ \ \ \ }%
\end{array}%
\right.  \tag{3.6}
\end{equation}%
as $m\rightarrow \infty $, for some $\chi \in $\ $L^{\frac{p}{p-1}%
}(0,T;W^{\ast })$. So, by (3.6), we get
\begin{equation}
u\in C([\varepsilon ,T];L^{2}(R^{n})).  \tag{3.7}
\end{equation}%
Since by (2.4) and (3.4), the sequence $\{u_{m}\}$ is bounded in $%
L^{p}(0,T;W_{loc}^{1,\frac{2n}{n+2}}(R^{n}))$, using (3.5) and Aubin type
compact embedding theorem (see \cite[Corollary 4]{21}), we have the
compactness of $\{u_{m}\}$ in $L^{1}(\varepsilon ,T;L_{loc}^{1}(R^{n}))$ for
every $\varepsilon \in (0,T).$ Hence there exists subsequences $%
\{u_{m_{n}}^{(k)}\}_{n=1}^{\infty }\subset
\{u_{m_{n}}^{(k-1)}\}_{n=1}^{\infty }\subset ...\subset
\{u_{m}\}_{m=1}^{\infty }$ and $\varepsilon _{k}\searrow 0$ such that%
\begin{equation*}
u_{m_{n}}^{(k)}\rightarrow u\text{ \ \ a.e. \ on \ }(\varepsilon
_{k},T)\times B(0,k),
\end{equation*}%
as $n\rightarrow \infty .$ Now, applying the diagonalization procedure, we
obtain (up to a subsequence $\{u_{m_{k}}^{(k)}\}$) that
\begin{equation}
u_{m}\rightarrow u\text{ \ \ a.e. \ on \ }(0,T)\times R^{n},  \tag{3.8}
\end{equation}%
as $m\rightarrow \infty .$ Now, \ because the sign of the function $%
\widetilde{f}(u)$ is the same as the sign of $u,$ together with (3.4), it
follows that%
\begin{equation}
\left\{
\begin{array}{c}
\int\limits_{0}^{t}\int\limits_{R^{n}}\widetilde{f}(u_{m}^{+}(\tau
,x))u_{m}^{+}(\tau ,x)dxd\tau \leq c_{5}, \\
\int\limits_{0}^{t}\int\limits_{R^{n}}\widetilde{f}(u_{m}^{-}(\tau
,x))u_{m}^{-}(\tau ,x)dxd\tau \leq c_{5}%
\end{array}%
\right. ,\text{ \ }\forall t\in \lbrack 0,T],  \tag{3.9}
\end{equation}%
where $u_{m}^{+}=\max \{u_{m},0\}$ and $u_{m}^{-}=\min \{u_{m},0\}.$ Since
the function $\widetilde{f}(\cdot )$ is continuous, increasing, positive for
$x>0$ and $\widetilde{f}(0)=0$, we can define an $N$-function (see \cite{22}
for definition)
\begin{equation*}
\widetilde{F}(x)=\int\limits_{0}^{\left\vert x\right\vert }\widetilde{f}%
(s)ds,
\end{equation*}%
which has a complementary $N$-function $\widetilde{G}$ as follows:%
\begin{equation*}
\widetilde{G}(y)=\int\limits_{0}^{\left\vert y\right\vert }\widetilde{f}%
^{-1}(\tau )d\tau .
\end{equation*}%
By definition of $\widetilde{G}(\cdot )$ and (3.9)$_{1}$, we get%
\begin{equation*}
\int\limits_{0}^{T}\int\limits_{R^{n}}\widetilde{G}(\widetilde{f}%
(u_{m}^{+}(\tau ,x))dxd\tau \leq \int\limits_{0}^{T}\int\limits_{R^{n}}%
\widetilde{f}(u_{m}^{+}(\tau ,x))u_{m}^{+}(\tau ,x)dxd\tau \leq c_{5},
\end{equation*}%
and consequently we obtain
\begin{equation}
\left\Vert \widetilde{f}(u_{m}^{+})\right\Vert _{L_{\widetilde{G}}^{\ast
}((0,T)\times B(0,k)\text{ })}\leq c_{5}+1,  \tag{3.10}
\end{equation}%
for every $k\in \mathbb{N}$, where $L_{\widetilde{G}}^{\ast }((0,T)\times
B(0,k))$ is the Orlicz space (see \cite{22} for definition). On the other
hand, defining $g(s)=-\widetilde{f}^{-1}(-s)$ for $s>0,$ we can construct a
new $N$-function $\Phi $ such as%
\begin{equation*}
\Phi (y)=\int\limits_{0}^{\left\vert y\right\vert }g(\xi )d\xi .
\end{equation*}%
Choosing $y=-\widetilde{f}(u_{m}^{-})$ and taking into account (3.9)$_{2}$,
we get%
\begin{equation*}
\int\limits_{0}^{T}\int\limits_{R^{n}}\Phi (\widetilde{f}(u_{m}^{-}(\tau
,x))dxd\tau \leq \int\limits_{0}^{T}\int\limits_{R^{n}}\widetilde{f}%
(u_{m}^{-}(\tau ,x))u_{m}^{-}(\tau ,x)dxd\tau \leq c_{5},
\end{equation*}%
and consequently%
\begin{equation}
\left\Vert \widetilde{f}(u_{m}^{-})\right\Vert _{L_{\Phi }^{\ast
}((0,T)\times B(0,k)\text{ })}\leq c_{5}+1,  \tag{3.11}
\end{equation}%
for every $k\in \mathbb{N}$. By using (3.8), continuity of $\widetilde{f}%
(\cdot )$ and the functions $\max \{s,0\}$ and $\min \{s,0\},$ it can be
inferred that%
\begin{equation}
\left\{
\begin{array}{c}
\widetilde{f}(u_{m}^{+})\rightarrow \widetilde{f}(u^{+})\text{ \ in measure
on\ }(0,T)\times B(0,k), \\
\widetilde{f}(u_{m}^{-})\rightarrow \widetilde{f}(u^{-})\text{ \ in measure
on\ }(0,T)\times B(0,k).%
\end{array}%
\right.  \tag{3.12}
\end{equation}%
Now, taking into account (3.10)-(3.12) and using the \textbf{\cite[Theorem
14.6, p. \textbf{132}]{22}}, we get%
\begin{equation*}
\int\limits_{0}^{T}\int\limits_{B(0,k)\text{ }}\widetilde{f}%
(u_{m}^{+}(t,x))v(t,x)dxdt\rightarrow \int\limits_{0}^{T}\int\limits_{B(0,k)%
\text{ }}\widetilde{f}(u^{+}(t,x))v(t,x)dxdt,\text{ \ \ }\forall \text{ }%
v\in E_{\widetilde{F}},
\end{equation*}%
\begin{equation*}
\int\limits_{0}^{T}\int\limits_{B(0,k)\text{ }}\widetilde{f}%
(u_{m}^{-}(t,x))w(t,x)dxdt\rightarrow \int\limits_{0}^{T}\int\limits_{B(0,k)%
\text{ }}\widetilde{f}(u^{-}(t,x))w(t,x)dxdt,\ \ \forall w\in E_{\Psi },
\end{equation*}%
for every $k\in \mathbb{N},$ where $\Psi $ is the complementary $N$-function
to $\Phi $ and $E_{\widetilde{F}}$ , $E_{\Psi }$ are the closures of the set
of bounded functions in the spaces $L_{\widetilde{F}}^{\ast }((0,T)\times
B(0,k))$ and $L_{\Psi }^{\ast }((0,T)\times B(0,k))$, respectively. The last
two approximations together with (3.6)$_{1}$ yield that
\begin{equation*}
\int\limits_{0}^{T}\int\limits_{B(0,k)\text{ }}f(u_{m}^{+}(t,x))v(t,x)dxdt%
\rightarrow \int\limits_{0}^{T}\int\limits_{B(0,k)\text{ }%
}f(u^{+}(t,x))v(t,x)dxdt,\text{ \ \ }\forall \text{ }v\in L^{\infty
}((0,T)\times R^{n}),
\end{equation*}%
\begin{equation*}
\int\limits_{0}^{T}\int\limits_{B(0,k)\text{ }}f(u_{m}^{-}(t,x))w(t,x)dxdt%
\rightarrow \int\limits_{0}^{T}\int\limits_{B(0,k)\text{ }%
}f(u^{-}(t,x))w(t,x)dxdt,\ \ \forall w\in L^{\infty }((0,T)\times R^{n}).
\end{equation*}%
Now, since
\begin{equation*}
f(u_{m}(t,x))=f(u_{m}^{+}(t,x))+f(u_{m}^{-}(t,x)),
\end{equation*}%
we obtain%
\begin{equation}
\int\limits_{0}^{T}\int\limits_{B(0,k)\text{ }}f(u_{m}(t,x))v(t,x)dxdt%
\rightarrow \int\limits_{0}^{T}\int\limits_{B(0,k)\text{ }%
}f(u(t,x))v(t,x)dxdt,\text{ }\forall \text{ }v\in L^{\infty }((0,T)\times
R^{n}),  \tag{3.13}
\end{equation}%
for every $k\in \mathbb{N}$. By (3.4)-(3.6), we have%
\begin{equation}
\left\langle u_{m},e_{j}\right\rangle \rightarrow \left\langle
u,e_{j}\right\rangle \text{ weakly star in\ \ }L^{\infty }(0,T),  \tag{3.14}
\end{equation}%
\begin{equation*}
\left\langle u_{mt},e_{j}\right\rangle \rightarrow \left\langle
u_{t},e_{j}\right\rangle \text{ weakly\ in}\ L^{2}(\varepsilon ,T),\text{ }%
\forall \varepsilon \in (0,T),
\end{equation*}%
\begin{equation}
\left\langle Au_{m},e_{j}\right\rangle \rightarrow \left\langle \chi
,e_{j}\right\rangle \text{ weakly\ in \ }L^{\frac{p}{p-1}}(0,T),\
\tag{3.15}
\end{equation}%
\begin{equation*}
\left\langle f(u_{m}),e_{j}\right\rangle \rightarrow \left\langle
f(u),e_{j}\right\rangle \text{ in\ }D^{\prime }(0,T).
\end{equation*}%
As a result, we can write that
\begin{equation}
\left\langle u_{t},e_{j}\right\rangle =-\left\langle \chi
,e_{j}\right\rangle -\left\langle f(u),e_{j}\right\rangle +\left\langle
g,e_{j}\right\rangle \text{ in }D^{\prime }(0,T).  \tag{3.16}
\end{equation}%
On the other hand by (3.4), we have
\begin{equation}
\int\limits_{0}^{T}\int\limits_{R^{n}}f(u_{m}(t,x))u_{m}(t,x))dxdt\leq c_{2}.
\tag{3.17}
\end{equation}%
Taking into account (3.8) and applying Fatou's lemma to (3.17), we obtain%
\begin{equation*}
\int\limits_{0}^{T}\int\limits_{R^{n}}f(u(t,x))u(t,x))dxdt\leq c_{2}.
\end{equation*}%
As it was mentioned in the Remark 2.1, the last inequality gives us that
\begin{equation*}
f(u)\in L^{1}(0,T;L^{1}(R^{n})+L^{2}(R^{n})).
\end{equation*}%
So, the equality (3.16) is satisfied a.e. in $(0,T)$ and by the density of $%
\{e_{j}\}_{j=1}^{\infty }$ in $W_{b}$ we get
\begin{equation*}
\left\langle u_{t},v\right\rangle =-\left\langle \chi ,v\right\rangle
-\left\langle f(u),v\right\rangle +\left\langle g,v\right\rangle \text{ a.e.
on }(0,T),
\end{equation*}%
for every $v\in W_{b}$, which together with Lemma 2.2 and Remark 2.2 , gives%
\begin{equation*}
\left\langle u_{t},v\right\rangle =-\left\langle \chi ,v\right\rangle
-\left\langle f(u),v\right\rangle +\left\langle g,v\right\rangle \text{ a.e.
on }(0,T),
\end{equation*}%
for every $v\in W\cap L^{\infty }(R^{n})$. From the last equality it follows
that $u_{t}\in L^{1}(0;T;L^{1}(R^{n})+W^{\ast })$ and
\begin{equation}
u_{t}=-\chi -f(u)+g,\text{ \ in \ }L^{1}(0;T;L^{1}(R^{n})+W^{\ast }).
\tag{3.18}
\end{equation}%
By $u\in L^{\infty }(0,T;L^{2}(R^{n}))$ and $u_{t}\in
L^{1}(0;T;L^{1}(R^{n})+W^{\ast })$, we have
\begin{equation*}
u\in C([0,T];L^{1}(R^{n})+W^{\ast })
\end{equation*}%
and consequently%
\begin{equation*}
u\in C_{s}(0,T;L^{1}(R^{n})+W^{\ast })
\end{equation*}%
Since by \cite[Lemma 8.1, p. 275]{23},
\begin{equation*}
L^{\infty }(0,T;L^{2}(R^{n}))\cap C_{s}(0,T;L^{1}(R^{n})+W^{\ast
})=C_{s}(0,T;L^{2}(R^{n}))\mathbf{,}
\end{equation*}%
we get
\begin{equation}
u\in C_{s}(0,T;L^{2}(R^{n})).  \tag{3.19}
\end{equation}%
Also, applying the argument done in the Remark 2.1 to (3.17) it is easy to
see that the sequence $\left\{ f(u_{m})\right\} _{m=1}^{\infty }$ is bounded
in $L^{1}(0,T;L^{1}(R^{n})+L^{2}(R^{n}))$, which implies the boundedness of $%
\left\{ \left\langle f(u_{m}),e_{j}\right\rangle \right\} _{m=1}^{\infty }$
in $L^{1}(0,T)$. So, taking into account the boundedness of $\left\{
\left\langle A(u_{m}),e_{j}\right\rangle \right\} _{m=1}^{\infty }$ in $L^{%
\frac{p}{p-1}}(0,T)$ and $\left\{ \left\langle f(u_{m}),e_{j}\right\rangle
\right\} _{m=1}^{\infty }$ in $L^{1}(0,T)$, by (3.1), we have that the
sequence\ $\left\{ \left\langle u_{mt},e_{j}\right\rangle \right\}
_{m=1}^{\infty }$ is bounded in $L^{1}(0,T)$ which, together with (3.14),
gives us%
\begin{equation*}
\left\langle u_{m}(0),e_{j}\right\rangle \rightarrow \left\langle
u(0),e_{j}\right\rangle ,\text{ \ \ }j=1,2,...\text{ \ .}
\end{equation*}%
On the other hand, since
\begin{equation*}
u_{m}(0)\rightarrow u_{0}\text{\ \ strongly in\ }L^{2}(R^{n}),
\end{equation*}%
we have $u(0)=u_{0}$. Hence, taking into account (1.3)-(1.5), (3.6) and
passing to the limit in (3.2) when $m\rightarrow \infty $, we get%
\begin{equation*}
\left\Vert u(t)\right\Vert _{L^{2}(R^{n})}^{2}\leq
2\int\limits_{0}^{t}\int\limits_{R^{n}}g(x)u(\tau ,x)dxd\tau +\left\Vert
u(0)\right\Vert _{L^{2}(R^{n})}^{2},
\end{equation*}%
and consequently we have%
\begin{equation*}
\underset{t\rightarrow 0}{\lim \sup }\left\Vert u(t)\right\Vert
_{L^{2}(R^{n})}^{2}\leq \left\Vert u(0)\right\Vert _{L^{2}(R^{n})}^{2}.
\end{equation*}%
By (3.7), (3.19) and the last inequality we obtain%
\begin{equation*}
u\in C([0,T];L^{2}(R^{n})).
\end{equation*}%
Now, since the operator $A:L^{p}(0,T;W)\rightarrow L^{\frac{p}{p-1}%
}(0,T;W^{\ast })$ is bounded, monotone and hemicontinuous, to prove that $\
\chi =Au$, in addition to (3.6)$_{3}$ and (3.6)$_{4}$ we need to show that%
\newline
$\underset{m\rightarrow \infty }{\lim \sup }$ $\int\limits_{0}^{T}\left%
\langle Au_{m}(t),u_{m}(t)\right\rangle dt\leq
\int\limits_{0}^{T}\left\langle \chi (t),u(t)\right\rangle dt$ (see \cite[%
Lemma 2.1, p. 38]{20}). By (3.2), we have%
\begin{equation*}
\int\limits_{0}^{T}\left\langle Au_{m}(t),u_{m}(t)\right\rangle
dt=\int\limits_{0}^{T}\int\limits_{R^{n}}\left( \sigma (x)\left\vert \nabla
u_{m}(\tau ,x)\right\vert ^{p}+\beta (x)\left\vert u_{m}(\tau ,x)\right\vert
^{2}\right) dxd\tau
\end{equation*}%
\begin{equation*}
=\int\limits_{0}^{T}\int\limits_{R^{n}}\left( g(x)u_{m}(\tau
,x)-f(u_{m}(\tau ,x))u_{m}(\tau ,x)\right) dxd\tau
\end{equation*}%
\begin{equation*}
+\frac{1}{2}\left\Vert u_{m}(0)\right\Vert _{L^{2}(R^{n})}^{2}-\frac{1}{2}%
\left\Vert u_{m}(T)\right\Vert _{L^{2}(R^{n})}^{2}.
\end{equation*}%
Since $u_{m}(0)\rightarrow u_{0}$ in $L^{2}(R^{n})$, taking into account
(3.6) and (3.8), and applying Fatou's lemma, we find%
\begin{equation*}
\underset{m\rightarrow \infty }{\lim \sup }\int\limits_{0}^{T}\left\langle
Au_{m},u_{m}\right\rangle dt\leq
\int\limits_{0}^{T}\int\limits_{R^{n}}g(x)u(\tau ,x)dxd\tau
-\int\limits_{0}^{T}\int\limits_{R^{n}}f(u(\tau ,x))u(\tau ,x)dxd\tau
\end{equation*}%
\begin{equation}
+\frac{1}{2}\left\Vert u(0)\right\Vert _{L^{2}(R^{n})}^{2}-\frac{1}{2}%
\left\Vert u(T)\right\Vert _{L^{2}(R^{n})}^{2}.  \tag{3.20}
\end{equation}%
On the other hand, by Remark 2.1 and Lemma 2.3, we can test (3.18) by $%
B_{k}(u)$ on $(\varepsilon ,T)\times R^{n}$, which gives us%
\begin{equation*}
\int\limits_{\varepsilon }^{T}\left\langle \chi (t),B_{k}(u)(t)\right\rangle
dt=\int\limits_{\varepsilon
}^{T}\int\limits_{R^{n}}g(x)B_{k}(u)(t,x)dxdt-\int\limits_{\varepsilon
}^{T}\int\limits_{R^{n}}f(u(x,t))B_{k}(u)(t,x)dxdt
\end{equation*}%
\begin{equation*}
+\int\limits_{R^{n}}u(\varepsilon ,x)B_{k}(u)(\varepsilon
,x)dx-\int\limits_{R^{n}}u(T,x)B_{k}(u)(T,x)dx
\end{equation*}%
\begin{equation*}
+\frac{1}{2}\left\Vert B_{k}(u)(T)\right\Vert _{L^{2}(R^{n})}^{2}-\frac{1}{2}%
\left\Vert B_{k}(u)(\varepsilon )\right\Vert _{L^{2}(R^{n})}^{2}.
\end{equation*}%
Taking into account Lemma 2.3 and passing to the limit as $k\rightarrow
\infty ,$ in the last equality, we obtain%
\begin{equation*}
\int\limits_{\varepsilon }^{T}\left\langle \chi (t),u(t)\right\rangle
dt=\int\limits_{\varepsilon }^{T}\int\limits_{R^{n}}g(x)u(t,x)dxdt-\underset{%
k\rightarrow \infty }{\lim }\int\limits_{\varepsilon
}^{T}\int\limits_{R^{n}}f(u(t,x))B_{k}(u)(t,x)dxdt
\end{equation*}%
\begin{equation}
+\frac{1}{2}\left\Vert u(\varepsilon )\right\Vert _{L^{2}(R^{n})}^{2}-\frac{1%
}{2}\left\Vert u(T)\right\Vert _{L^{2}(R^{n})}^{2}.  \tag{3.21}
\end{equation}%
Since, the sequence $\left\{ f(u(t,x))B_{k}(u)(t,x)\right\} _{k=1}^{\infty }$
is non-decreasing and $B_{k}(u)\rightarrow u$ in $C([0,T];L^{2}(R^{n}))$, by
monotone convergence theorem, we have%
\begin{equation*}
\underset{\mathbf{k\rightarrow \infty }}{\lim }\int\limits_{\varepsilon
}^{T}\int\limits_{R^{n}}f(u(t,x))B_{k}(u)(t,x)dxdt=\int\limits_{\varepsilon
}^{T}\int\limits_{R^{n}}f(u(t,x))u(t,x)dxdt,
\end{equation*}%
which together with (3.21) yields
\begin{equation*}
\int\limits_{\varepsilon }^{T}\left\langle \chi (t),u(t)\right\rangle
dt=\int\limits_{\varepsilon
}^{T}\int\limits_{R^{n}}g(x)u(t,x)dxdt-\int\limits_{\varepsilon
}^{T}\int\limits_{R^{n}}f(u(t,x))u(t,x)dxdt
\end{equation*}%
\begin{equation*}
+\frac{1}{2}\left\Vert u(\varepsilon )\right\Vert _{L^{2}(R^{n})}^{2}-\frac{1%
}{2}\left\Vert u(T)\right\Vert _{L^{2}(R^{n})}^{2}.
\end{equation*}%
Since $u$ $\in $ $C([0,T];L^{2}(R^{n}))$, passing to the limit in the last
equality as $\varepsilon \rightarrow 0$, we get%
\begin{equation*}
\int\limits_{0}^{T}\left\langle \chi (t),u(t)\right\rangle
dt=\int\limits_{0}^{T}\int\limits_{R^{n}}g(x)u(t,x)dxdt-\int\limits_{0}^{T}%
\int\limits_{R^{n}}f(u(t,x))u(t,x)dxdt
\end{equation*}%
\begin{equation*}
+\frac{1}{2}\left\Vert u(0)\right\Vert _{L^{2}(R^{n})}^{2}-\frac{1}{2}%
\left\Vert u(T)\right\Vert _{L^{2}(R^{n})}^{2}.
\end{equation*}%
Taking into account the last equality in (3.20), we obtain $\chi =Au$, which
completes the proof of the existence of the solution.
\end{proof}

\begin{theorem}
Let $u$ and $v$ be weak solutions of problem (1.1)-(1.2), with initial data $%
u_{0}$ and $v_{0}$, respectively. Then
\begin{equation}
\left\Vert u(T)-v(T)\right\Vert _{L^{2}(R^{n})}\leq e^{cT}\left\Vert
u_{0}-v_{0}\right\Vert _{L^{2}(R^{n})}\text{, \ }\forall T\geq 0,  \tag{3.22}
\end{equation}%
where $c$ is the same constant in (1.5).
\end{theorem}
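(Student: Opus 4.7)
The plan is to derive an energy inequality for $w:=u-v$ by testing the difference equation against the truncation $B_{k}(w)$, and then close with Gronwall's inequality. Subtracting the two weak formulations gives, for every admissible $\varphi\in W\cap L^{\infty}(R^{n})$,
\[
\langle w_{t},\varphi\rangle+\langle Au-Av,\varphi\rangle+\langle f(u)-f(v),\varphi\rangle=0\ \text{ a.e. on }(0,T).
\]
I cannot take $\varphi=w(t)$ directly because $w(t)$ need not lie in $L^{\infty}(R^{n})$. Instead I would use $\varphi=B_{k}(w(t))$: since $u(t),v(t)\in W$ for a.e.\ $t$, Lemma 2.3 gives $B_{k}(w(t))\in W$, and by construction $\|B_{k}(w(t))\|_{L^{\infty}(R^{n})}\leq k$, so it is a legitimate test function.

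Next I would check the sign of each spatial term. Because $\nabla B_{k}(w)=\nabla w\cdot\chi_{\{|w|<k\}}$ and $\sigma\geq 0$, the $p$-Laplacian contribution equals
\[
\int_{\{|w|<k\}}\sigma(x)\bigl(|\nabla u|^{p-2}\nabla u-|\nabla v|^{p-2}\nabla v\bigr)\cdot\nabla w\,dx\geq 0
\]
by the classical monotonicity of $s\mapsto|s|^{p-2}s$. The $\beta$--part $\int\beta(x)\,w\,B_{k}(w)\,dx$ is non-negative since $w$ and $B_{k}(w)$ share sign and $\beta\geq 0$. For the source I use the mean value theorem and (1.5):
\[
(f(u)-f(v))B_{k}(w)=f'(\xi)\,w\,B_{k}(w)\geq -c\,w\,B_{k}(w),
\]
because $wB_{k}(w)\geq 0$ and $f'(\cdot)>-c$.

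The central technical step, and the main obstacle, is justifying the chain rule
\[
\int_{0}^{T}\langle w_{t},B_{k}(w)\rangle\,dt=\int_{R^{n}}G_{k}(w(T,x))\,dx-\int_{R^{n}}G_{k}(w(0,x))\,dx,
\]
where $G_{k}(s):=\int_{0}^{s}B_{k}(\tau)\,d\tau$. Since $w_{t}$ only belongs to $L^{1}(0,T;L^{1}(R^{n})+W^{\ast})$, this identity is not immediate. I would establish it by the same regularization device the authors announce for the energy equality in the well-posedness section: approximate $u,v$ by sufficiently regular (bounded) solutions for which the classical chain rule for $G_{k}\circ w$ applies, then pass to the limit using $w\in C([0,T];L^{2}(R^{n}))\cap L^{p}(0,T;W)$ together with Lemma 2.3 and Remark 2.2.

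Combining the four pieces yields
\[
\int_{R^{n}}G_{k}(w(T,x))\,dx\leq\int_{R^{n}}G_{k}(w(0,x))\,dx+c\int_{0}^{T}\!\!\int_{R^{n}}w(t,x)B_{k}(w(t,x))\,dx\,dt.
\]
Passing to the limit $k\to\infty$ by monotone convergence (using $G_{k}(s)\to s^{2}/2$ and $sB_{k}(s)\nearrow s^{2}$) gives
\[
\tfrac{1}{2}\|w(T)\|_{L^{2}(R^{n})}^{2}\leq\tfrac{1}{2}\|u_{0}-v_{0}\|_{L^{2}(R^{n})}^{2}+c\int_{0}^{T}\|w(t)\|_{L^{2}(R^{n})}^{2}\,dt.
\]
Gronwall's inequality then produces $\|w(T)\|_{L^{2}(R^{n})}^{2}\leq e^{2cT}\|u_{0}-v_{0}\|_{L^{2}(R^{n})}^{2}$, and taking square roots delivers (3.22).
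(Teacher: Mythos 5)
Your proposal is correct and follows essentially the same route as the paper: both test the difference equation by the truncation $B_{k}(w)$, exploit the monotonicity of the $p$-Laplacian and of $\widetilde f(s)=f(s)+cs$, identify the time-derivative contribution with the primitive $G_{k}$ of $B_{k}$ (the paper writes this as $\int wB_{k}(w)-\tfrac12\Vert B_{k}(w)\Vert^{2}$, which equals your $\int G_{k}(w)$), let $k\to\infty$, and conclude with Gronwall. The only cosmetic difference is that the paper works on $(\varepsilon,T)$ and sends $\varepsilon\to0$ using $w\in C([0,T];L^{2}(R^{n}))$, rather than regularizing at $t=0$ as you suggest.
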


\begin{proof}
Denoting $w=u-v$, we have%
\begin{equation}
\left\{
\begin{array}{c}
w_{t}+\left( A(v+w)-Av\right) -cw+\widetilde{f}(v+w)-\widetilde{f}(v)=0, \\
w(0)=u_{0}-v_{0}.%
\end{array}%
\right.  \tag{3.23}
\end{equation}%
Testing (3.23)$_{1}$ by $B_{k}(w)$ on $(\varepsilon ,T)\times R^{n}$ and
taking into account the monotonicity of the function $\widetilde{f}$, we get%
\begin{equation*}
\int\limits_{R^{n}}w(T,x)B_{k}(w)(T,x)dx-\frac{1}{2}\left\Vert
B_{k}(w)(T)\right\Vert _{L^{2}(R^{n})}^{2}
\end{equation*}%
\begin{equation*}
+\int\limits_{\varepsilon }^{T}\int\limits_{R^{n}}\sigma (x)(\left\vert
\nabla u(t,x)\right\vert ^{p-2}\nabla u(t,x)-\left\vert \nabla
v(t,x)\right\vert ^{p-2}\nabla v(t,x))\cdot \nabla B_{k}(w)(t,x)dxdt
\end{equation*}%
\begin{equation*}
\leq \int\limits_{R^{n}}w(x,\varepsilon )B_{k}(w)(\varepsilon ,x)dx-\frac{1}{%
2}\left\Vert B_{k}(w)(\varepsilon )\right\Vert
_{L^{2}(R^{n})}^{2}+c\int\limits_{\varepsilon
}^{T}\int\limits_{R^{n}}w(x,t)B_{k}(w)(t,x)dxdt.
\end{equation*}%
By the definition of $B_{k}(\cdot )$\ and monotonicity of the function $%
s^{p-1}$\ for $s\geq 0$, we have%
\begin{equation*}
\int\limits_{\varepsilon }^{T}\int\limits_{R^{n}}\sigma (x)(\left\vert
\nabla u(t,x)\right\vert ^{p-2}\nabla u(t,x)\mathbf{-}\left\vert \nabla
v(t,x)\right\vert ^{p-2}\nabla v(t,x)\mathbf{)\cdot }\nabla B_{k}(w)\mathbf{(%
}t,x)dxdt
\end{equation*}%
\begin{equation*}
\mathbf{=}\int\limits_{\varepsilon }^{T}\int\limits_{\left\{ x\in
R^{n},\left\vert w(t,x)\right\vert \leq k\right\} }\sigma (x)(\left\vert
\nabla u(t,x)\right\vert ^{p-2}\nabla u(t,x)\mathbf{-}\left\vert \nabla
v(t,x)\right\vert ^{p-2}\nabla v(t,x)\mathbf{)}\cdot \nabla
(u(t,x)-v(t,x))dxdt
\end{equation*}%
\begin{equation*}
\mathbf{\geq }\int\limits_{\varepsilon }^{T}\int\limits_{\left\{ x\in
R^{n},\left\vert w(t,x)\right\vert \leq k\right\} }\sigma (x)\mathbf{(}%
\left\vert \nabla u(t,x)\right\vert ^{p}\mathbf{-}\left\vert \nabla
u(t,x)\right\vert ^{p-1}\left\vert \nabla v(t,x)\right\vert \mathbf{)}dxdt
\end{equation*}%
\begin{equation*}
\mathbf{+}\int\limits_{\varepsilon }^{T}\int\limits_{\left\{ x\in
R^{n},\left\vert w(t,x)\right\vert \leq k\right\} }\sigma (x)\mathbf{(}%
\left\vert \nabla v(t,x)\right\vert ^{p}\mathbf{-}\left\vert \nabla
v(t,x)\right\vert ^{p-1}\left\vert \nabla u(t,x)\right\vert \mathbf{)}dxdt
\end{equation*}%
\begin{equation*}
\mathbf{=}\int\limits_{\varepsilon }^{T}\int\limits_{\left\{ x\in
R^{n},\left\vert w(t,x)\right\vert \leq k\right\} }\sigma (x)\mathbf{(}%
\left\vert \nabla u(t,x)\right\vert ^{p-1}\mathbf{-}\left\vert \nabla
v(t,x)\right\vert ^{p-1}\mathbf{)(}\left\vert \nabla u(t,x)\right\vert
\mathbf{-}\left\vert \nabla v(t,x)\right\vert \mathbf{)}dxdt\geq 0.
\end{equation*}%
By the last two inequalities, we find%
\begin{equation*}
\int\limits_{R^{n}}w(x,T)B_{k}(w)(T,x)dx-\frac{1}{2}\left\Vert
B_{k}(w)(T)\right\Vert _{L^{2}(R^{n})}^{2}\leq
\int\limits_{R^{n}}w(\varepsilon ,x)B_{k}(w)(\varepsilon ,x)dx
\end{equation*}%
\begin{equation*}
-\frac{1}{2}\left\Vert B_{k}(w)(\varepsilon )\right\Vert
_{L^{2}(R^{n})}^{2}+c\int\limits_{\varepsilon
}^{T}\int\limits_{R^{n}}w(x,t)B_{k}(w)(t,x)dxdt
\end{equation*}%
Passing to the limit as $k\rightarrow \infty $ and $\varepsilon \rightarrow
0 $ in the above inequality and taking into account (3.23)$_{2}$, we obtain%
\begin{equation*}
\frac{1}{2}\left\Vert w(T)\right\Vert _{L^{2}(R^{n})}^{2}\leq \frac{1}{2}%
\left\Vert u_{0}-v_{0}\right\Vert
_{L^{2}(R^{n})}^{2}+c\int\limits_{0}^{T}\left\Vert w(t)\right\Vert
_{L^{2}(R^{n})}^{2}dt,\text{ \ }\forall T\geq 0,
\end{equation*}%
which by Gronwall's lemma yields (3.22).
\end{proof}

Thus, by Theorem 3.1 and Theorem 3.2, under the conditions (1.3)-(1.5) the
solution operator $S(t)u_{0}=u(t)$ of problem (1.1)-(1.2) generates a
strongly continuous$\ $semigroup in $L^{2}(R^{n})$.

\section{Existence of the global attractor}

We begin with the existence of the absorbing set for the semigroup $\left\{
S(t)\right\} _{t\geq 0}.$

\begin{theorem}
Assume that the conditions (1.3)-(1.5) are satisfied. Then the semigroup $%
\left\{ S(t)\right\} _{t\geq 0}$ has a bounded absorbing set in \ $%
L^{2}(R^{n})$, that is, there is a bounded set $B_{0}$ in \ $L^{2}(R^{n})$
such that for any bounded subset $B$ of $L^{2}(R^{n}),$ there exists a $%
T_{0}=T_{0}(B)>0$ such that $\ S(t)B\subset B_{0}$ for every $t\geq T_{0}$.
\end{theorem}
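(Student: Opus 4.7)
The plan is to derive a dissipative differential inequality for $y(t):=\|S(t)u_{0}\|_{L^{2}(R^{n})}^{2}$ and read off an absorbing radius from Gronwall's lemma. First I would establish the energy equality by testing the equation in Definition 2.2 with the truncation $v=B_{k}(u(t))$, which lies in $W\cap L^{\infty}(R^{n})$ by Lemma 2.3 and hence is admissible. With $G_{k}(s):=\int_{0}^{s}B_{k}(r)\,dr$ one has $\int_{\varepsilon}^{T}\langle u_{t},B_{k}(u)\rangle\,dt=\int_{R^{n}}(G_{k}(u(T))-G_{k}(u(\varepsilon)))\,dx$ by the chain rule, and since $\nabla B_{k}(u)=\chi_{\{|u|\leq k\}}\nabla u$ the operator term collapses to $\int_{\varepsilon}^{T}\!\int_{\{|u|\leq k\}}\sigma|\nabla u|^{p}\,dx\,dt+\int_{\varepsilon}^{T}\!\int_{R^{n}}\beta u B_{k}(u)\,dx\,dt$. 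Letting $k\to\infty$ by monotone convergence on the $p$-Laplacian, $\beta$ and $f$ integrals (whose integrands are non-negative and non-decreasing in $k$) and by dominated convergence on the $u_{t}$ and $g$ integrals, and then letting $\varepsilon\to 0$ using $u\in C([0,T];L^{2}(R^{n}))$, yields
\begin{equation*}
\tfrac{1}{2}\|u(t)\|_{L^{2}(R^{n})}^{2}+\int_{0}^{t}\!\!\int_{R^{n}}(\sigma|\nabla u|^{p}+\beta u^{2}+f(u)u)\,dx\,d\tau=\tfrac{1}{2}\|u_{0}\|_{L^{2}(R^{n})}^{2}+\int_{0}^{t}\!\!\int_{R^{n}}gu\,dx\,d\tau.
\end{equation*}

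Differentiating in $t$ and discarding the non-negative term $\int f(u)u\,dx$, I would bound the source integral via Cauchy--Schwarz together with Lemma 2.2 by $\int gu\,dx\leq\bar{C}\|g\|_{L^{2}(R^{n})}\bigl[(\int\sigma|\nabla u|^{p})^{1/p}+(\int\beta u^{2})^{1/2}\bigr]$, and then apply Young's inequality with the conjugate pairs $(p,p/(p-1))$ and $(2,2)$ to absorb both contributions into the dissipation, producing
\begin{equation*}
\tfrac{d}{dt}\|u\|_{L^{2}(R^{n})}^{2}+\int_{R^{n}}\sigma|\nabla u|^{p}\,dx+\int_{R^{n}}\beta u^{2}\,dx\leq K_{g},
\end{equation*}
where $K_{g}$ depends only on $\|g\|_{L^{2}(R^{n})}$. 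To convert this into a linear Gronwall ODE, I would invoke Lemma 2.2 a second time in its coercive form $\|u\|_{L^{2}(R^{n})}^{2}\leq 2\bar{C}^{2}[(\int\sigma|\nabla u|^{p})^{2/p}+\int\beta u^{2}]$ and use the elementary bound $x^{2/p}\leq 1+x$ (valid for all $x\geq 0$ and $p\geq 2$) to rewrite it as $\int\sigma|\nabla u|^{p}\,dx+\int\beta u^{2}\,dx\geq\tfrac{1}{2\bar{C}^{2}}\|u\|_{L^{2}(R^{n})}^{2}-1$.

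Substituting this lower bound yields $\tfrac{d}{dt}\|u\|_{L^{2}(R^{n})}^{2}+\alpha\|u\|_{L^{2}(R^{n})}^{2}\leq K$ with $\alpha=1/(2\bar{C}^{2})$ and $K=K_{g}+1$, so Gronwall's lemma produces $\|u(t)\|_{L^{2}(R^{n})}^{2}\leq e^{-\alpha t}\|u_{0}\|_{L^{2}(R^{n})}^{2}+K/\alpha$. Consequently the ball $B_{0}:=\{v\in L^{2}(R^{n}):\|v\|_{L^{2}(R^{n})}^{2}\leq 2K/\alpha\}$ is absorbing: for any bounded $B\subset L^{2}(R^{n})$ with $\sup_{u_{0}\in B}\|u_{0}\|_{L^{2}(R^{n})}\leq R$ it suffices to pick $T_{0}(B)$ large enough that $e^{-\alpha T_{0}}R^{2}\leq K/\alpha$. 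The principal obstacle is the rigorous derivation of the energy equality: since $f$ has no polynomial growth bound and $u$ is not a priori in $L^{\infty}(R^{n})$, $u$ itself is not an admissible test function, but Lemma 2.3 and Remark 2.2 supply admissible approximations $B_{k}(u)\in W\cap L^{\infty}(R^{n})$ on which the monotonicity in $k$ of both $f(s)B_{k}(s)$ and $\chi_{\{|u|\leq k\}}|\nabla u|^{p}$ renders the limit transitions automatic.
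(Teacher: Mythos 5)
Your proof is correct, and the analytic core — the energy identity, discarding $\int f(u)u\geq 0$, the coercivity $\|u\|_{L^{2}}\leq\overline{C}\|u\|_{W}$ from Lemma 2.2, the elementary bound $x^{2/p}\leq 1+x$ to linearize the $p$-Laplacian dissipation, and Gronwall — is exactly the paper's. The genuine difference is where the estimate is performed. The paper runs the whole computation at the Galerkin level: it multiplies (3.1)$_{j}$ by $c_{mj}(t)$, obtains the differential inequality for $\|u_{m}(t)\|_{L^{2}}^{2}$, applies Gronwall to $u_{m}$, and only then passes to the limit using $u_{m}(t)\rightharpoonup u(t)$ weakly in $L^{2}(R^{n})$ for each $t>0$ together with weak lower semicontinuity of the norm and $u_{m}(0)\to u_{0}$ strongly. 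You instead derive the energy equality directly for the weak solution by testing with the truncations $B_{k}(u)$ and passing $k\to\infty$ by monotone and dominated convergence. Both are legitimate here: your route yields the stronger statement that the energy identity holds for the weak solution itself, but it requires the truncation machinery (Lemma 2.3, Remark 2.2, the chain rule for $\langle u_{t},B_{k}(u)\rangle$ via $u\in W_{loc}^{1,2}(0,T;L^{2})$) — machinery the paper does develop, and uses for exactly this purpose in the proofs of Theorems 3.1 and 4.2, so nothing is missing. The paper's route is cheaper for this particular theorem because it needs only weak convergence of $u_{m}(t)$ and lower semicontinuity, sidestepping any justification of $u$ (or $B_{k}(u)$) as a test function; note that it implicitly relies on uniqueness (Theorem 3.2) to ensure the Galerkin limit is the semigroup solution $S(t)u_{0}$, a point your argument does not need. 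One minor caveat in your write-up: rather than literally differentiating the integrated identity, it is cleaner to observe that all time-integrands are in $L^{1}(0,T)$, so $t\mapsto\|u(t)\|_{L^{2}}^{2}$ is absolutely continuous and the differential inequality holds a.e., which is all Gronwall requires.
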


\begin{proof}
Multiplying the equation (3.1)$_{j}$ by the function $c_{mj}(t)$, for each $%
j $, adding these relations for \ $j=1,...,m$, we get the following equality
:%
\begin{equation*}
\frac{1}{2}\frac{d}{dt}\left\Vert u_{m}(t)\right\Vert
_{L^{2}(R^{n})}^{2}+\int\limits_{R^{n}}\sigma (x)\left\vert \nabla
u_{m}(t,x)\right\vert ^{p}dx+\int\limits_{R^{n}}\beta (x)\left\vert
u_{m}(t,x)\right\vert ^{2}dx
\end{equation*}%
\begin{equation}
+\int\limits_{R^{n}}f(u_{m}(t,x))u_{m}(t,x)dx=\int%
\limits_{R^{n}}g(x)u_{m}(t,x)dx,\ \ \ \text{\ }\forall t\geq 0\text{.}
\tag{4.1}
\end{equation}%
Since
\begin{equation*}
1+\int\limits_{R^{n}}\sigma (x)\left\vert \nabla u_{m}(t,x)\right\vert
^{p}dx\geq \left( \int\limits_{R^{n}}\sigma (x)\left\vert \nabla
u_{m}(t,x)\right\vert ^{p}dx\right) ^{\frac{2}{p}},
\end{equation*}%
by taking into account Lemma 2.2 in (4.1), we obtain
\begin{equation*}
\frac{d}{dt}\left\Vert u_{m}(t)\right\Vert
_{L^{2}(R^{n})}^{2}+c_{1}\left\Vert u_{m}(t)\right\Vert
_{L^{2}(R^{n})}^{2}\leq c_{2}\left\Vert g\right\Vert _{L^{2}(R^{n})}^{2}+2,
\end{equation*}%
and consequently%
\begin{equation*}
\left\Vert u_{m}(t)\right\Vert _{L^{2}(R^{n})}^{2}\leq e^{-c_{1}t}\left\Vert
u_{m}(0)\right\Vert _{L^{2}(R^{n})}^{2}+\frac{1}{c_{1}}(c_{2}\left\Vert
g\right\Vert _{L^{2}(R^{n})}^{2}+2),
\end{equation*}%
for some $c_{1}>0$ and $c_{2}>0.$\ By (3.6)$_{1}$\ and (3.6)$_{2}$, we have%
\textbf{\ }%
\begin{equation*}
u_{m}\rightarrow u\text{ weakly in }C([\varepsilon ,T];L^{2}(R^{n}))\text{,
\ }\forall T>\varepsilon >0\text{,}
\end{equation*}%
which yields%
\begin{equation*}
u_{m}(t)\rightarrow u(t)\text{ weakly in }L^{2}(R^{n})\text{, \ }\forall t>0%
\text{.}
\end{equation*}

On the other hand, since $u_{m}(0)\rightarrow u_{0}$\ strongly in $%
L^{2}(R^{n})$, passing to the limit in the last inequality we find that
\begin{equation*}
B_{0}=\left\{ u\in L^{2}(R^{n}):\text{ }\left\Vert u\right\Vert
_{L^{2}(R^{n})}\leq \frac{1}{c_{1}}(c_{2}\left\Vert g\right\Vert
_{L^{2}(R^{n})}^{2}+2)+1\right\}
\end{equation*}%
is an absorbing set for $\left\{ S(t)\right\} _{t\geq 0}$.
\end{proof}

Now, let's prove the asymptotic compactness property of the semigroup $%
\left\{ S(t)\right\} _{t\geq 0}.$

\begin{theorem}
Let conditions (1.3)-(1.5) hold and $B$ be a bounded subset of $L^{2}(R^{n})$%
. Then the set $\left\{ S(t_{m})\varphi _{m}\right\} _{m=1}^{\infty }$ is
relatively compact in $L^{2}(R^{n}),$ where $t_{m}\rightarrow \infty $, $%
\left\{ \varphi _{m}\right\} _{m=1}^{\infty }\subset B$.
\end{theorem}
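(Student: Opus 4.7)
The plan is as follows. First, I construct a bounded complete past trajectory $u^\ast$ whose value $u^\ast(0)$ is the weak $L^2$-limit of $S(t_m)\varphi_m$. Second, I establish the energy equality for weak solutions. Third, I apply weak lower semicontinuity of the dissipative terms together with a uniform tail estimate to upgrade weak to strong convergence. This is the Ball / Moise--Rosa--Wang scheme adapted to the present setting.

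\textbf{Step 1 (Complete past trajectory).} By Theorem 4.1, for each $T>0$ and all $m$ sufficiently large, $\psi_m^T:=S(t_m-T)\varphi_m\in B_0$. Let $u_m^T(s):=S(s)\psi_m^T$ on $[0,T]$, so that $u_m^T(T)=S(t_m)\varphi_m$. The a priori bounds (3.3)--(3.5), Lemma 2.2, and the Aubin argument leading to (3.8) give, after passing to a subsequence and a Cantor diagonal over $T=1,2,\dots$, a bounded complete weak solution $u^\ast:(-\infty,0]\to L^2(R^n)$ such that, writing $u^T(s):=u^\ast(s-T)$, one has $u_m^T\rightharpoonup u^T$ weakly in $L^p(0,T;W)$, $u_m^T\to u^T$ almost everywhere on $(0,T)\times R^n$, and $S(t_m)\varphi_m\rightharpoonup u^\ast(0)$ weakly in $L^2(R^n)$.

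\textbf{Step 2 (Energy equality).} I justify
\begin{equation*}
\|u(t)\|_{L^2}^2+2\!\int_s^t\!\!\int_{R^n}\!\bigl(\sigma|\nabla u|^p+\beta u^2+f(u)u-gu\bigr)\,dx\,d\tau=\|u(s)\|_{L^2}^2
\end{equation*}
for every weak solution $u$ and every $0\le s\le t\le T$, by testing equation (3.18) with $B_k(u)$ (admissible by Remark 2.2 and Lemma 2.3) and passing to the limit $k\to\infty$ via monotone convergence on the $f(u)B_k(u)$ term (enabled by $f(u)u\ge 0$ from (1.5)) and on $\sigma|\nabla u|^p\chi_{\{|u|\le k\}}$, strong $W$-convergence from Lemma 2.3 on the term $\beta uB_k(u)$, and continuity $u\in C([0,T];L^2)$ on the boundary terms---essentially a symmetric version of the argument already carried out in (3.21) of the well-posedness proof.

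\textbf{Step 3 (Closing via l.s.c.\ and a tail estimate).} Applying the energy equality to both $u_m^T$ and $u^T$ on $[0,T]$, subtracting, and taking $\limsup_m$, using (i) $\int g(u_m^T-u^T)\,dx\to 0$ from weak $L^2$ convergence, (ii) weak l.s.c.\ of the convex map $u\mapsto\int(\sigma|\nabla u|^p+\beta u^2)\,dx$, and (iii) Fatou applied to $f(u_m^T)u_m^T\ge 0$ with the a.e.\ convergence, I obtain
\begin{equation*}
\limsup_m\|S(t_m)\varphi_m\|^2-\|u^\ast(0)\|^2\le \limsup_m\|\psi_m^T\|^2-\|u^\ast(-T)\|^2.
\end{equation*}
To kill the defect bracket, I derive a uniform tail estimate by testing (3.18) with $\theta_R(x)u$, where $\theta_R$ is a radial cutoff supported in $\{|x|\ge R\}$ with $R>r_0$: since $\beta(x)\ge\beta_0$ on the support of $\theta_R$ and $f(u)u\ge 0$, Gronwall yields that $\int_{|x|\ge 2R}|S(t)\varphi|^2\,dx$ tends to zero uniformly for $\varphi\in B_0$ and $t$ large, as $R\to\infty$. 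Combining this (applied at time $-T$) with the a.e.\ convergence of $\psi_m^T$ to $u^\ast(-T)$ on $B(0,2R)$ and the Brezis--Lieb identity, the defect bracket vanishes upon diagonalization in $R$ and $T$. Together with the reverse weak-l.s.c.\ bound $\|u^\ast(0)\|^2\le\liminf_m\|S(t_m)\varphi_m\|^2$, this gives norm convergence, and in combination with weak convergence, a.e.\ convergence, and the tail estimate, strong convergence in $L^2(R^n)$.

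\textbf{Main obstacle.} The technically delicate step is the tail estimate under the weakened hypothesis (1.3): the commutator $\sigma|\nabla u|^{p-2}\nabla u\cdot u\nabla\theta_R$ produced by differentiating the cutoff cannot be absorbed via a pointwise lower bound on $\sigma$ (unavailable here, in contrast to \cite{17},\cite{18}), and must instead be controlled by H\"older against $\sigma^{-2n/(n(p-2)+2p)}\in L^1_{\text{loc}}$ on the annulus $R\le|x|\le 2R$---precisely what (1.3) affords. It is this same weakened assumption that forbids the compact embedding $W\hookrightarrow L^2_{\text{loc}}(R^n)$ and forces the energy-equality/tail-estimate combination to substitute for the missing local compactness.
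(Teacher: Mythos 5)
Your Step 3 contains two gaps that I believe are fatal, and both occur precisely at the point where the paper's hypotheses are weakest. First, the uniform tail estimate: testing with $\theta_{R}u$ produces the commutator $\int\sigma\left\vert \nabla u\right\vert ^{p-2}\nabla u\cdot u\,\nabla \theta _{R}$, and after H\"older with exponents $p/(p-1)$ and $p$ you are left needing to control $\int_{R\leq \left\vert x\right\vert \leq 2R}\sigma \left\vert u\right\vert ^{p}$, which is not controlled by $u\in L^{2}$ and $\sigma^{1/p}\nabla u\in L^{p}$ when $\sigma$ is merely in $L_{loc}^{1}$ (possibly unbounded) and $p\geq 2$. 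The weight $\sigma ^{-2n/(n(p-2)+2p)}\in L_{loc}^{1}$ that you invoke is tailored to remove $\sigma$ from $\sigma\left\vert \nabla u\right\vert ^{p}$ (as in Lemma 2.2); it does not help with a term carrying a full power of $\sigma$ against $\left\vert \nabla u\right\vert ^{p-1}\left\vert u\right\vert $, and in any case its local $L^{1}$ norm on the annulus may grow arbitrarily in $R$, so no smallness as $R\rightarrow \infty $ results. Second, even granting the tail estimate, your claim that "the defect bracket vanishes" via a.e.\ convergence and Brezis--Lieb is only the easy direction: Brezis--Lieb gives $\liminf_{m}\Vert \psi _{m}^{T}\Vert ^{2}\geq \Vert u^{\ast }(-T)\Vert^{2}$, not $\limsup_{m}\Vert \psi _{m}^{T}\Vert ^{2}\leq \Vert u^{\ast}(-T)\Vert ^{2}$. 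Closing the gap would require strong $L_{loc}^{2}$ convergence of $\psi _{m}^{T}$ at the fixed time $-T$, i.e.\ exactly the local compactness that the paper's novelty (i) explicitly renounces (the embedding of the $W$-norm into $L^{2}(\Omega )$ is not compact for any subdomain), and the a.e.\ convergence you have is only in $(t,x)$ jointly, not on a fixed time slice. So your scheme is circular at the defect step.

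The paper avoids both issues by a different mechanism: no spatial localization at all. After establishing the two energy identities (your Step 2 is essentially the paper's (4.5)--(4.6), and that part of your proposal is sound), it uses the strong monotonicity of $A$, namely $\left\langle Au-Aw,u-w\right\rangle \geq c\left\Vert u-w\right\Vert_{W}^{p}\geq c^{\prime }\left\Vert u-w\right\Vert _{L^{2}(R^{n})}^{p}$ by Lemma 2.2, so that the difference of the two energy identities bounds the time-averaged quantity $\int_{2\varepsilon }^{T}s\left\Vert u_{k}(s)-w(s)\right\Vert _{L^{2}}^{p}ds$ by $\int_{\varepsilon}^{T}\left\Vert u_{k}(s)-w(s)\right\Vert _{L^{2}}^{2}ds$; a H\"older-in-time absorption then shows $\liminf_{k}\left\Vert u_{k}(T)-w(T)\right\Vert _{L^{2}}^{2}=O(T^{-1}\log T)$ or $O(T^{-1/(p-1)})$, which is killed by letting $T\rightarrow \infty $, and a Cauchy/total-boundedness argument concludes. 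You should replace the tail-estimate-plus-Brezis--Lieb step by this monotonicity-plus-time-averaging argument (the approach of \cite{19}); the rest of your outline (past-trajectory construction, energy equality via $B_{k}$ truncation) is compatible with it.
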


\begin{proof}
By Theorem 4.1, there exists $T_{0}=T_{0}(B)>0$ such that
\begin{equation}
\left\Vert S(t)\varphi \right\Vert _{L^{2}(R^{n})}\leq c_{1},\hspace{1cm}%
\forall \hspace{0.3cm}t\geq T_{0},\text{ \ }\forall \varphi \in B.  \tag{4.2}
\end{equation}%
For any $T>0$ and $\left\{ t_{m_{k}}\right\} \subset \left\{ t_{m}\right\} $
such that $t_{m_{k}}\geq T+T_{0}$ let us define%
\begin{equation}
u_{k}(t):=S(t+t_{m_{k}}-T)\varphi _{m_{k}},  \tag{4.3}
\end{equation}%
where $u_{k}$ is the solution of (1.1) with the initial condition $%
u_{k}(0)=S(t_{m_{k}}-T)\varphi _{m_{k}}.$ Putting $u_{k}$ instead of $u$ in
(1.1), formally multiplying the obtained equation by $u_{k}$ and $tu_{kt}$,
integrating over $(0,T)\times R^{n}$ and then taking into account condition
(1.3) -(1.5) and (4.2)-(4.3), we find the following estimates%
\begin{equation*}
\left\{
\begin{array}{c}
\left\Vert u_{k}\right\Vert _{L^{p}(0,T;W)}+\left\Vert u_{k}\right\Vert
_{L^{\infty }(0,T;L^{2}(R^{n}))}\text{ \ \ \ \ \ \ \ \ \ \ \ \ \ \ \ \ \ \ \
\ } \\
+\int\limits_{0}^{T}\int\limits_{R^{n}}f(u_{k}(t,x))u_{k}(t,x)dxdt\leq c_{2},%
\text{ \ \ \ \ \ \ \ \ \ \ \ \ \ \ \ \ \ \ \ } \\
\left\Vert u_{k}\right\Vert _{L^{p}(0,T;W^{1,\frac{2n}{n+2}}(B(0,r))}\text{
\ \ \ \ \ \ \ \ \ \ \ \ \ \ \ \ \ \ \ \ \ \ \ \ \ \ \ \ \ \ \ \ \ } \\
+\left\Vert u_{kt}\right\Vert _{L^{2}(\varepsilon ,T;L^{2}(R^{n}))}\leq
c_{\varepsilon ,r},\text{ \ \ }\forall \varepsilon \in (0,T),\text{ \ }%
\forall r>0.%
\end{array}%
\right.
\end{equation*}%
These estimates can be justified by using Galerkin's approximation as it was
done in the previous section. So, repeating the argument done in the proof
of Theorem 3.1, for the subsequence of $u_{k}$, without changing the name of
it, we have
\begin{equation}
\left\{
\begin{array}{c}
u_{k}\rightarrow w\text{ \ weakly in \ }L^{p}(0,T;W),\text{ \ \ \ \ \ \ \ \
\ \ \ } \\
u_{k}\rightarrow w\text{ \ weakly star in}\hspace{0.2cm}L^{\infty
}(0,T;L^{2}(R^{n})) \\
u_{kt}\rightarrow w_{t}\text{ \ weakly in }L^{2}(\varepsilon
,T;L^{2}(R^{n})),\text{ \ \ \ } \\
Au_{k}\rightarrow \chi \text{ weakly\ in }\ L^{\frac{p}{p-1}}(0,T;W^{\ast }),%
\text{ \ \ \ \ \ } \\
u_{k}\rightarrow w\text{ \ a.e.\hspace{0.2cm}in}\hspace{0.2cm}(0,T)\times
R^{n},\text{ \ \ \ \ \ \ \ \ \ \ \ \ \ \ \ } \\
f(u_{k})\rightarrow f(w)\text{ in}\ \ D^{\prime }(0,T\times R^{n}),\text{ \
\ \ \ \ \ \ \ \ \ }%
\end{array}%
\right.  \tag{4.4}
\end{equation}%
where $\chi $ $\in L^{\frac{p}{p-1}}(0,T;W^{\ast })$, $w\in $ $L^{\infty
}(0,T;L^{2}(R^{n}))\cap L^{p}(0,T;W)\cap W^{1,2}(\varepsilon
,T;L^{2}(R^{n})) $ and\newline
$\int\limits_{0}^{T}\int\limits_{R^{n}}f(w(t,x))w(t,x)dxdt<\infty $. Now,
putting $u_{k}$ instead of $u$ in (1.1) and passing to the limit, we find%
\begin{equation*}
w_{t}+\chi +f(w)=g.
\end{equation*}%
Taking into account Remark 2.1 and Lemma 2.3, and testing the above equation
by $B_{k}(w)$ on $(s,T)\times R^{n}$, we obtain%
\begin{equation*}
\int\limits_{s}^{T}\left\langle \chi (t),B_{k}(w)(t)\right\rangle
dt=\int\limits_{s}^{T}\int\limits_{R^{n}}g(x)B_{k}(w)(t,x)dxdt-\int%
\limits_{s}^{T}\int\limits_{R^{n}}f(w(t,x))B_{k}(w)(t,x)dxdt
\end{equation*}%
\begin{equation*}
+\int\limits_{R^{n}}w(s,x)B_{k}(w)(s,x)dx-\int%
\limits_{R^{n}}w(T,x)B_{k}(w)(T,x)dx
\end{equation*}%
\begin{equation*}
+\frac{1}{2}\left\Vert B_{k}(w)(T)\right\Vert _{L^{2}(R^{n})}^{2}-\frac{1}{2}%
\left\Vert B_{k}(w)(s)\right\Vert _{L^{2}(R^{n})}^{2},\text{ \ }\forall
T\geq s>0\text{.}
\end{equation*}%
Again repeating the argument done in the proof of Theorem 3.1, passing to
the limit as $k\rightarrow \infty $ and integrating the obtained equality
from $\varepsilon $ to $T$ \ with respect to $s$, we get%
\begin{equation*}
\frac{1}{2}(T-\varepsilon )\left\Vert w(T)\right\Vert
_{L^{2}(R^{n})}^{2}+\int\limits_{\varepsilon
}^{T}\int\limits_{s}^{T}\left\langle \chi (t),w(t)\right\rangle
dtds+\int\limits_{\varepsilon
}^{T}\int\limits_{s}^{T}\int\limits_{R^{n}}f(w(x,t))w(x,t)dxdtds
\end{equation*}%
\begin{equation}
=\frac{1}{2}\int\limits_{\varepsilon }^{T}\left\Vert w(s)\right\Vert
_{L^{2}(R^{n})}^{2}ds+\int\limits_{\varepsilon
}^{T}\int\limits_{s}^{T}\int\limits_{R^{n}}g(x)w(x,t)dxdtds,\text{ \ }%
\forall T\geq \varepsilon >0.  \tag{4.5}
\end{equation}%
Now, putting $u_{k}$ instead of $u$ in (1.1), testing this equation by $%
B_{m}(u_{k})$ on $(s,T)\times R^{n}$, integrating the obtained equality from
$\varepsilon $ to $T$ \ with respect to $s$ and passing to the limit as $%
m\rightarrow \infty $, we obtain
\begin{equation*}
\frac{1}{2}(T-\varepsilon )\left\Vert u_{k}(T)\right\Vert
_{L^{2}(R^{n})}^{2}+\int\limits_{\varepsilon
}^{T}\int\limits_{s}^{T}\left\langle Au_{k}(t),u_{k}(t)\right\rangle
dtds+\int\limits_{\varepsilon
}^{T}\int\limits_{s}^{T}\int\limits_{R^{n}}f(u_{k}(t,x))u_{k}(t,x)dxdtds
\end{equation*}%
\begin{equation}
=\frac{1}{2}\int\limits_{\varepsilon }^{T}\left\Vert u_{k}(s)\right\Vert
_{L^{2}(R^{n})}^{2}ds+\int\limits_{\varepsilon
}^{T}\int\limits_{s}^{T}\int\limits_{R^{n}}g(x)u_{k}(t,x)dxdtds,\text{ \ }%
\forall T\geq \varepsilon >0.  \tag{4.6}
\end{equation}%
By (4.2)-(4.4), we have
\begin{equation*}
\underset{k\rightarrow \infty }{\lim \inf }\int\limits_{\varepsilon
}^{T}\int\limits_{s}^{T}\left\langle Au_{k}(t),u_{k}(t)\right\rangle
dtds-\int\limits_{\varepsilon }^{T}\int\limits_{s}^{T}\left\langle \chi
(t),w(t)\right\rangle dtds
\end{equation*}%
\begin{equation*}
=\underset{k\rightarrow \infty }{\lim \inf }\int\limits_{\varepsilon
}^{T}\int\limits_{s}^{T}\left\langle
Au_{k}(t)-Aw(t),u_{k}(t)-w(t)\right\rangle dtds
\end{equation*}%
\begin{equation}
\geq c_{3}\underset{k\rightarrow \infty }{\lim \inf }\int\limits_{%
\varepsilon }^{T}\int\limits_{s}^{T}\left\Vert u_{k}(t)-w(t)\right\Vert
_{W}^{p}dtds,  \tag{4.7}
\end{equation}%
and
\begin{equation}
\underset{k\rightarrow \infty }{\lim \inf }\int\limits_{\varepsilon
}^{T}\int\limits_{s}^{T}\int\limits_{R^{n}}f(u_{k}(t,x))u_{k}(t,x)dxdtds\geq
\int\limits_{\varepsilon
}^{T}\int\limits_{s}^{T}\int\limits_{R^{n}}f(w(t,x))w(t,x)dxdtds.  \tag{4.8}
\end{equation}%
So, by (4.4)-(4.8), we obtain%
\begin{equation*}
\mathbf{(}T-\varepsilon \mathbf{)}\underset{k\rightarrow \infty }{\lim \inf }%
\left\Vert u_{k}(T)-w(T)\right\Vert _{L^{2}(R^{n})}^{2}\mathbf{+}2c_{3}%
\underset{k\rightarrow \infty }{\lim \inf }\int\limits_{\varepsilon }^{T}(s%
\mathbf{-}\varepsilon )\left\Vert u_{k}(s)-w(s)\right\Vert _{W}^{p}ds
\end{equation*}%
\begin{equation*}
\leq \underset{k\rightarrow \infty }{\lim \inf }\int\limits_{\varepsilon
}^{T}\left\Vert u_{k}(s)-w(s)\right\Vert _{L^{2}(R^{n})}^{2}ds\mathbf{,}%
\text{ \ }\mathbf{\forall }T\geq \varepsilon ,
\end{equation*}%
and consequently%
\begin{equation*}
\mathbf{(}T-\varepsilon \mathbf{)}\underset{k\rightarrow \infty }{\lim \inf }%
\left\Vert u_{k}(T)-w(T)\right\Vert _{L^{2}(R^{n})}^{2}\mathbf{+}c_{4}%
\underset{k\rightarrow \infty }{\lim \inf }\int\limits_{2\varepsilon
}^{T}s\left\Vert u_{k}(s)-w(s)\right\Vert _{L^{2}(R^{n})}^{p}ds
\end{equation*}%
\begin{equation*}
\mathbf{\leq }\underset{k\rightarrow \infty }{\lim \inf }\int\limits_{%
\varepsilon }^{T}\left\Vert u_{k}(s)-w(s)\right\Vert _{L^{2}(R^{n})}^{2}ds%
\mathbf{,}\text{ \ }\mathbf{\forall }T\geq 2\varepsilon \mathbf{.}
\end{equation*}%
Taking into account (4.2), (4.3) and (4.4)$_{2}$ in the integral on the
right hand side of the above inequality, we have%
\begin{equation*}
\int\limits_{\varepsilon }^{T}\left\Vert u_{k}(s)-w(s)\right\Vert
_{L^{2}(R^{n})}^{2}ds\mathbf{\leq }2c_{1}^{2}\varepsilon \mathbf{+}%
2c_{1}\int\limits_{2\varepsilon }^{T}\left\Vert u_{k}(s)-w(s)\right\Vert
_{L^{2}(R^{n})}ds,\text{ \ }\forall T\geq 2\varepsilon \mathbf{.}
\end{equation*}%
Applying Holder inequality, we get%
\begin{equation*}
\int\limits_{2\varepsilon }^{T}\left\Vert u_{k}(s)-w(s)\right\Vert
_{L^{2}(R^{n})}ds
\end{equation*}%
\begin{equation*}
\leq \left\{
\begin{array}{c}
\log ^{\frac{1}{2}}(\frac{T}{2\varepsilon })\left( \int\limits_{2\varepsilon
}^{T}s\left\Vert u_{k}(s)-w(s)\right\Vert _{L^{2}(R^{n})}^{p}ds\right) ^{%
\frac{1}{2}},\text{ \ }p=2, \\
\left( \frac{p-1}{p-2}\left( T^{\frac{p-2}{p-1}}-(2\varepsilon )^{\frac{p-2}{%
p-1}}\right) \right) ^{\frac{p-1}{p}}\left( \int\limits_{2\varepsilon
}^{T}s\left\Vert u_{k}(s)-w(s)\right\Vert _{L^{2}(R^{n})}^{p}ds\right) ^{%
\frac{1}{p}},\text{\ }p>2%
\end{array}%
,\right. \text{ \ \ }\forall T\geq 2\varepsilon .
\end{equation*}%
The last three inequalities together with (4.4)$_{2}$-(4.4)$_{3}$ give us%
\begin{equation*}
\underset{k\rightarrow \infty }{\lim \inf }\underset{i\rightarrow \infty }{%
\lim \inf }\left\Vert u_{k}(T)-u_{i}(T)\right\Vert _{L^{2}(R^{n})}^{2}\leq
\frac{2c_{1}^{2}\varepsilon }{T-\varepsilon }
\end{equation*}%
\begin{equation*}
+\frac{c_{5}}{T-\varepsilon }\left\{
\begin{array}{c}
\log (\frac{T}{2\varepsilon }),\text{ \ \ \ \ \ \ \ \ \ \ \ \ \ \ \ \ \ \ \
\ }p=2, \\
\frac{p-1}{p-2}\left( T^{\frac{p-2}{p-1}}-(2\varepsilon )^{\frac{p-2}{p-1}%
}\right) ,\text{ }p>2%
\end{array}%
,\right. \text{ \ \ }\forall T\geq 2\varepsilon
\end{equation*}%
and consequently%
\begin{equation*}
\liminf_{m\rightarrow \infty }\liminf_{k\rightarrow \infty }\left\Vert
S(t_{m})\varphi _{m}-S(t_{k})\varphi _{k}\right\Vert _{L^{2}(R^{n})}^{2}\leq
\frac{2c_{1}^{2}\varepsilon }{T-\varepsilon }
\end{equation*}%
\begin{equation*}
+\frac{c_{5}}{T-\varepsilon }\left\{
\begin{array}{c}
\log (\frac{T}{2\varepsilon }),\text{ \ \ \ \ \ \ \ \ \ \ \ \ \ \ \ \ \ \ \
\ }p=2, \\
\frac{p-1}{p-2}\left( T^{\frac{p-2}{p-1}}-(2\varepsilon )^{\frac{p-2}{p-1}%
}\right) ,\text{ }p>2%
\end{array}%
,\right. \text{ \ \ \ }\forall T\geq 2\varepsilon .
\end{equation*}%
Passing to the limit in the above inequality as $T\rightarrow \infty $, we
have%
\begin{equation*}
\liminf_{m\rightarrow \infty }\liminf_{k\rightarrow \infty }\left\Vert
S(t_{m})\varphi _{m}-S(t_{k})\varphi _{k}\right\Vert _{L^{2}(R^{n})}=0.
\end{equation*}%
By the same way, one can show that%
\begin{equation}
\underset{k\rightarrow \infty }{\lim \inf }\underset{i\rightarrow \infty }{%
\lim \inf }\left\Vert S(t_{m_{i}})\varphi _{m_{i}}-S(t_{m_{k}})\varphi
_{m_{k}}\right\Vert _{L^{2}(R^{n})}=0,  \tag{4.9}
\end{equation}%
for every subsequence $\left\{ m_{k}\right\} _{k=1}^{\infty }$. Now, using
the argument done at the end of the proof of \cite[Lemma 3.4]{19}, let us
show that the sequence $\left\{ S(t_{m})\varphi _{m}\right\} _{m=1}^{\infty
} $\ is relatively compact in $L^{2}(R^{n})$. If not, then there exists $%
\varepsilon _{0}>0$\ such that the set $\left\{ S(t_{m})\varphi _{m}\right\}
_{m=1}^{\infty }$\ has no finite $\varepsilon _{0}$-net in $L^{2}(R^{n})$.
This means that there exists a subsequence $\left\{ m_{k}\right\}
_{k=1}^{\infty }$, such that\textbf{\ }%
\begin{equation*}
\left\Vert S(t_{m_{i}})\varphi _{m_{i}}-S(t_{m_{k}})\varphi
_{m_{k}}\right\Vert _{L^{2}(R^{n})}\geq \varepsilon _{0},\text{ \ }i\neq k.
\end{equation*}%
The last inequality contradicts (4.9).
\end{proof}

Thus, taking into account Theorem 4.1, Theorem 4.2 and applying \cite[%
Theorem 3.1]{24} we have Theorem 1.1.

%\textbf{Acknowledgments.} The authors are grateful to the referees for helpful suggestions.

\end{document}